\documentclass[12pt]{amsart}
\usepackage{latexsym,amscd,amssymb,epsfig}

\theoremstyle{plain}
  \newtheorem{theorem}{Theorem}[section]
  \newtheorem{proposition}[theorem]{Proposition}
  \newtheorem{lemma}[theorem]{Lemma}
  \newtheorem{corollary}[theorem]{Corollary}
  \newtheorem{conjecture}[theorem]{Conjecture}
\theoremstyle{definition}
  \newtheorem{definition}[theorem]{Definition}
  \newtheorem{example}[theorem]{Example}

 \theoremstyle{remark}

\numberwithin{equation}{section}

\def\QQ{{\mathbb Q}}

\def\FF{{\mathbb F}}

\def\complexes{{\mathbb C}}

\def\CC{{\mathcal C}}

\def\Sym{\mathfrak{S}}

\def\im{\mathrm{im}}
\def\unmatched{\mathrm{unM}}
\def\matched{\mathrm{M}}
\def\rank{\mathrm{rank}}
\def\mult{\mathrm{mult}}

\def\op{\mathrm{op}}
\def\Hom{{\mathrm{Hom}}}

\def\rev{{\mathsf r}{\mathsf e}{\mathsf v}}



\begin{document}

\title
{The $q=-1$ phenomenon 
via homology concentration}

\author{P. Hersh}
\address{North Carolina State University and Indiana University}
\email{phersh@indiana.edu}

\author{J. Shareshian}
\address{Washington University in St. Louis}
\email{shareshi@math.wustl.edu}

\author{D. Stanton}
\address{University of Minnesota}
\email{stanton@math.umn.edu}

\thanks{The authors were supported by NSF grants 
DMS-1200730, 
DMS-1202337, and DMS-1148634 respectively.  The first author was also 
supported by the Ruth Michler Prize of the Association for Women in Mathematics.}


\begin{abstract}
We introduce a homological approach to exhibiting instances of Stembridge's $q=-1$
phenomenon.  This approach is shown to explain two important instances of the
phenomenon, namely that of partitions whose Ferrers diagrams fit in a rectangle 
of fixed size and that of plane partitions fitting in a box of fixed size.
A more general framework of invariant and coinvariant complexes with 
coefficients taken mod 2 is developed, and as a part of this story 
an analogous homological result  for necklaces is conjectured.

\end{abstract}

\maketitle


\section{Introduction}
\label{intro}

There is a rich history surrounding the enumeration
of partitions in a rectangle or
higher dimensional box, as well as the enumeration of  
classes of partitions possessing various symmetries (see e.g. 
\cite{St-plane}, \cite{Ciu}).  
One reason for so much interest comes from connections to physics, 
while another is the important role they play in representation theory,
specifically in the theory of canonical bases (see e.g. \cite{St1} and 
\cite{St}).  
Richard Stanley used the Littlewood-Richardson rule 
in \cite{St-plane} to prove a recursive formula for the number of 
self-complementary plane partitions of bounded value.
John Stembridge proved that semistandard domino tableaux are counted
by this same formula, by showing that their 
enumeration formula satisfies the
same recurrence.  This proved that the set of 
fixed points in a fundamental involution
of Lusztig on a type A canonical basis also has this same
cardinality, by virtue of a bijection due to Berenstein
and Zelevinsky between the elements of a
canonical basis and semistandard Young tableaux (actually Gelfand-Tsetlin
patterns, objects also in bijection with semistandard Young tableaux) such that this
bijection sends Lusztig's involution to evacuation.
Stembridge examined this connection between self-complementary 
partitions and canonical bases more closely, 
unveiling in the process a phenomenon he dubbed
the ``$q=-1$ phenomenon''.  


In \cite{St}, Stembridge defines the $q=-1$ phenomenon as follows.  Given a finite set $B$ and a weight function $w:B \rightarrow {\mathbb N}_0$, one defines the generating function
\[
X(q):=\sum_{b \in B}q^{w(b)}.
\]
The $q=-1$ phenomenon occurs when there is some ``natural" involution $t$ on $B$ such that $X(-1)$ is the number of fixed points of $t$.  Stembridge exhibited various instances of the $q=-1$ phenomenon in which $B$ is an interesting collection of combinatorial objects with a natural 
weight function $w$, by showing in each case that $X(-1)$ is the trace of a matrix that is conjugate to the permutation matrix for $t$.

Here we introduce another method of proving that the $q=-1$ phenomenon occurs.  It is natural to ask if the phenomenon may be explained in interesting cases by an Euler characteristic computation.  That is, given $B,t$ as above, we seek a chain complex ${\mathcal C}$ such that, for each $i$,  the rank of the $i^{th}$ chain group $C_i$ is the coefficient of $q^i$ in $X(q)$.  Under these conditions,  the Euler characteristic of ${\mathcal C}$ is $X(-1)$.  We aim to use the fact that the Euler characteristic is also the alternating sum of the ranks of the homology groups and to choose such a complex  ${\mathcal C}$ so that its homology $H_\ast({\mathcal C})$ is concentrated in even dimensions and has a basis indexed by the set of fixed points of $t$, or at least by a set admitting a nice bijection to the fixed point set.  Thus the $q=-1$ phenomenon may be proven using the Euler-Poincar\'e formula. This approach amounts to a categorification of the $q=-1$ phenomenon.

In order to apply our method, we need some way of computing the homology of $\CC$.  In Section \ref{morse-sec}, we describe a rudimentary algebraic version of discrete Morse theory that suffices in the cases we examine herein.  In these cases, the chain groups $C_i$ are indexed by elements of $B$.  To compute $X(-1)$, one might find a fixed-point-free involution $m$ on some large subset $S$ of $B$ such that $|w(m(s))-w(s)|=1$ for all $s \in S$.  Then $X(-1)=|B \setminus S|$.  The idea behind algebraic discrete Morse theory is that, under the right conditions, $m$ can be used to reduce $\CC$ to a smaller complex $\CC^\prime$ such that $\CC$ and $\CC^\prime$ have the same homology and $H_\ast(\CC^\prime)$ is very easy to compute. 

We will carry out our homological approach in two quite central cases, namely when $B$ is the set of partitions whose Ferrers diagram fits in a rectangle of fixed size and when $B$ is the set of 3-dimensional partitions that fit in a box of fixed size.   In each case, the involution $t$ sends a partition to its complement in the given rectangle or box.  Thus, the $t$-fixed partitions are exactly those that are self-complementary. 

In the first case, our complex arises as a special case of the following general construction, which is developed in Section \ref{chain-complex-section}.  For a positive integer $n$, let $t$ be the involution on the power set $2^{[n]}$ mapping $Y$ to its complement $[n] \setminus Y$.   For any subgroup $G$ of the symmetric group $\Sym_n$, the action of $G$ on $[n]$ determines an action on $2^{[n]}$, and $t$ commutes with each element of $G$ in this action.  Thus $t$ acts on the set ${\mathcal O}(G)$ of orbits of $G$.   We describe in Section \ref{chain-complex-section} four chain complexes, each of which has Euler characteristic equal to the number of fixed points of $t$ on ${\mathcal O}(G)$.  Moreover, in each such complex, the rank of the $i^{th}$ chain space is the number of orbits of $G$ on the set ${{[n]} \choose {i}}$ of $i$-subsets of $[n]$.  Thus if we define the weight function $w$ on ${\mathcal O}(G)$ by mapping an orbit to the size of any representative, the resulting generating function $X(q)$ will give the number of orbits fixed by $t$ upon substituting $-1$ for $q$.  Thus we hope that the homology of one of these complexes is concentrated in even dimensions and has a basis indexed by the set of self -complementary $G$-orbits.  Our hope is realized when $G$ has an orbit of odd size on $[n]$, in which case the complexes are acyclic.  In particular, the complexes are acyclic when $|G|$ is odd.  We know also that each of these complexes has trivial first homology group.

In Section \ref{rect}, we indeed use one of the complexes from Section \ref{chain-complex-section} with $G$ a wreath product of symmetric groups with the set $B$ being the Ferrers diagrams fitting in a  rectangle of fixed size.  To this end, we make use of the following idea.
If we factor $n$ as $n=k\ell $ and identify $[n]$ with the set of boxes in a $k \times \ell$  rectangle.  We consider the partition of this set of boxes into $k$ subsets of size $l$, each subset consisting of all the boxes in a fixed row of the rectangle.  The stabilizer $G$ of this partition in $\Sym_n$ is isomorphic with the wreath product $\Sym_\ell \wr \Sym_k$.  Each $G$-orbit on $2^{[n]}$ contains the Ferrers diagram of a unique partition.   We show that indeed the homology of one of the associated complexes is concentrated in even dimensions and has a basis indexed by objects that are easily seen to be in bijection with the set of self-complementary partitions in our rectangle.

In Section \ref{plane} we turn to the case of partitions in a box.  We do not use the general construction from Section \ref{chain-complex-section}.  Rather, we use the fact that the set of plane partitions fitting in an $r \times c \times t$ box is equinumerous with the set of semi-standard Young tableaux of shape $(c^r)$ with entries from $\{0,1,\ldots,r+t-1\}$.  Moreover, there is a bijection between these two sets that intertwines the complementation map on the set of plane partitions and the evacuation map on the set of semi-standard Young tableaux.  We define a chain complex whose $i^{th}$ chain space has basis indexed by the set of semi-standard Young tableaux under consideration whose entries sum to $i+c{{r} \choose {2}}$.  We use the results of Section \ref{morse-sec} to show that the homology of this complex is concentrated in even dimensions and has a basis indexed by semi-standard domino tableaux of suitable bounded value.  Thus, the objects which previously arose in Stembridge's work also emerge in a completely natural way from our complexes and our Morse theory.   Section \ref{plane} concludes with a poset-theoretic result regarding posets playing an important role in our application  of algebraic Morse theory to the chain complexes studied therein.  We give a decomposition of each of these posets  into a disjoint union of Boolean algebras, which yields a formula for a principal specialization of the Schur function associated to the partition $(c^r)$.

In Section \ref{not-even}, we examine further the orbit complexes introduced in Section \ref{chain-complex-section}.  We describe an infinite family of permutation groups $G$ for which the homology of the associated complexes is not concentrated in even dimensions.  In fact, such concentration fails to occur whenever $n$ is even and a Sylow $2$-subgroup of $G$ contains no derangement.  Finally, we conjecture that this homology is concentrated in even dimensions when $G \leq \Sym_n$ is generated by an $n$-cycle.  In fact, we give a conjectured generating function for the the ranks of the homology groups in this case.

\section{Chain complexes resulting from a  group action}\label{chain-complex-section}


In this section, we define, for each permutation group on a finite set, an associated chain complex.  Certain complexes of this type will be used later to give a homological proof of the $q=-1$ phenomenon in the case of partitions in a rectangle.

Let $G$ be a subgroup of $\Sym_n$.  So, $G$ acts on $[n]:=\{1,2,\ldots,n\}$ and this action determines an action of $G$ on the set ${{[n]} \choose {i}}$ of subsets of cardinality $i$ from $[n]$ for each $i$.  Thus we have an action of $G$ on the set $2^{[n]}$ of all subsets of $[n]$.  For $S \subseteq [n]$, let $\overline{S}$ be the $G$-orbit containing $S$ in this action, and let $2^{[n]}/G$ be the set of all $G$-orbits on $2^{[n]}$.  We consider the generating function
\begin{eqnarray*}
X(G,q) & := & \sum_{\overline{S} \in 2^{[n]}/G}q^{|S|} \\ & = & \sum_{i=0}^{n}\left| {{[n]} \choose i}/G \right|q^i,
\end{eqnarray*}
counting $G$-orbits on $2^{[n]}$ according to the cardinalities of their representatives.  (Here ${{n} \choose {i}}/G$ is the set of $G$-orbits on ${{n} \choose {i}}$.)  The polynomial $X(G,q)$ has been well-studied via algebraic means, perhaps first in the work of Redfield and P\'olya.  Key in this study is the introduction of a vector space $V$ over an arbitrary field $\FF$ with basis $\{e_0,e_1\}$.  Having fixed $G \leq \Sym_n$, we define the tensor product $\CC(\FF)=V^{\otimes n}$.  For $S \subseteq [n]$, we define
\[
e_S:=e_{i_1} \otimes \cdots \otimes e_{i_n} \in \CC(\FF),
\]
where
\[
i_j:=\left\{ \begin{array}{ll} 1 & \mbox{if }j \in S, \\ 0 & \mbox{if } j \not\in S. \end{array} \right.
\]
Then $\{e_S:S \subseteq [n]\}$ is a basis for $\CC(\FF)$ that is permuted by $G$ according the the action of $G$ on $2^{[n]}$ and thus we obtain a linear action of $G$ on $\CC(\FF)$.  The vector space $\CC(\FF)$ admits a natural grading
\[
\CC(\FF)=\bigoplus_{i=0}^{n}\CC(\FF)_i,
\]
where
\[
\CC(\FF)_i:=\FF\{e_S:S \in {{[n]} \choose {i}}\}.
\]
Now $G$ acts linearly on Each $\CC(\FF)_i$, and we can define the spaces $G$-{\it invariants}
\[
\CC(\FF)_i^G:=\{v \in \CC(\FF)_i:g(v)=v \mbox{ for all } g \in G\}
\]
and the space of $G$-{\it coninvariants}
\[
(\CC(\FF)_i)_G:=\CC(\FF)_i/\FF\{u-g(u):u \in \CC(\FF)_i, g \in G\}.
\]
We define also
\[
\CC(\FF)^G:=\bigoplus_{i=0}^{n}\CC(\FF)_i^G
\]
and
\[
\CC(\FF)_G:=\bigoplus_{i=0}^{n}(\CC(\FF)_i)_G.
\]
Note that, for $0 \leq i \leq n$,
\begin{equation} \label{orbinv}
\dim \CC(\FF)_i^G=\left| {{[n]} \choose {i}}/G\right| = \dim (\CC(\FF)_i)_G.
\end{equation}
Indeed, for each $\Omega \in {{[n]} \choose {i}}/G$, set
\[
e_\Omega:=\sum_{S \in \Omega}e_S.
\]
Then $\{e_\Omega:\Omega \in {{n} \choose {i}}/G\}$ is a basis for $\CC(\FF)_i^G$.  Similarly, if we define $\overline{e_S}$ to be the coset represented by $e_S$ in $(\CC(\FF)_i)_G$, then we get a basis $\{\overline{e_S}\}$ for $(\CC(\FF)_i)_G$ by selecting one representative $S$ from each $G$-orbit on ${{[n]} \choose {i}}$.

See \cite[Corollary 6.2]{RSW} for the next result, which is due to N. G. de Bruijn.

\begin{theorem}
(de Bruijn)
\label{deBruijn-theorem}
$X(G, -1)$ is the number of $G$-orbits $\bar{S}$ of subsets of $[n]$ 
which are self-complementary in the sense that $\bar{S} = \overline{[n] \backslash S}$.
\end{theorem}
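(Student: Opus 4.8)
The plan is to prove the identity by two applications of the Cauchy--Frobenius (Burnside) lemma, obtaining the same cycle-index expression from each side. First I would compute $X(G,q)$ itself. For each $i$ we have $\bigl|{{[n]}\choose{i}}/G\bigr| = \frac{1}{|G|}\sum_{g \in G}|\mathrm{Fix}_i(g)|$, where $\mathrm{Fix}_i(g)$ is the set of $i$-subsets fixed by $g$. A subset is fixed by $g$ exactly when it is a union of cycles of $g$, so weighting each such subset by $q^{|S|}$ and summing over $i$ yields
\[
X(G,q) = \frac{1}{|G|}\sum_{g \in G}\prod_{c}\bigl(1+q^{|c|}\bigr),
\]
the inner product running over the cycles $c$ of $g$ (fixed points counting as $1$-cycles). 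Setting $q=-1$, the factor $1+(-1)^{|c|}$ vanishes when $c$ is an odd cycle and equals $2$ when $c$ is even, so
\[
X(G,-1) = \frac{1}{|G|}\sum_{g}2^{c(g)},
\]
where $c(g)$ is the number of cycles of $g$ and the sum is restricted to those $g$ all of whose cycles have even length.

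Next I would count the self-complementary orbits group-theoretically. Since $t$ commutes with every element of $G$ and $t \notin G$ (as $t$ moves $\emptyset$), the group $H:=\langle G,t\rangle = G \sqcup Gt$ has order $2|G|$ and acts on $2^{[n]}$. The $H$-orbit of $S$ is $\bar{S} \cup \overline{[n]\setminus S}$, so a self-complementary $G$-orbit constitutes an $H$-orbit by itself, while each pair of distinct complementary $G$-orbits fuses into a single $H$-orbit. Writing $N$, $O$, $F$ for the numbers of $G$-orbits, $H$-orbits, and self-complementary $G$-orbits respectively, this gives $O = F + (N-F)/2$. Comparing with Burnside's counts
\[
O = \frac{1}{2|G|}\Bigl(\sum_{g}|\mathrm{Fix}(g)| + \sum_{g}|\mathrm{Fix}(gt)|\Bigr), \qquad N = \frac{1}{|G|}\sum_{g}|\mathrm{Fix}(g)|,
\]
the contributions of the elements $g \in G$ cancel, leaving
\[
F = \frac{1}{|G|}\sum_{g \in G}|\mathrm{Fix}(gt)|, \qquad \mathrm{Fix}(gt) = \{S \subseteq [n] : g([n]\setminus S) = S\}.
\]

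The remaining step is to evaluate $|\mathrm{Fix}(gt)|$ and match it to the first computation. The condition $g([n]\setminus S)=S$ is equivalent to requiring, for all $x$, that $x \in S \iff g(x) \notin S$; that is, membership in $S$ must alternate along each cycle of $g$. An odd cycle admits no consistent alternation, so $\mathrm{Fix}(gt)=\emptyset$ unless every cycle of $g$ is even, in which case there are exactly two alternating choices on each cycle, giving $|\mathrm{Fix}(gt)| = 2^{c(g)}$. This is precisely the restricted sum found for $X(G,-1)$, so $X(G,-1) = F$, as claimed.

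I expect the only delicate point to be the bookkeeping in the second Burnside argument, namely keeping straight that self-complementary orbits remain singletons while complementary pairs fuse, so that the $G$-terms cancel and isolate the coset $Gt$. The remaining ingredients---the Pólya-style computation of fixed subsets as unions of cycles, and the alternation analysis along cycles---are routine.
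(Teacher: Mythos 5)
Your proof is correct, and it takes a genuinely different route from the paper's. The paper argues by a trace computation on the invariant space $\CC(\QQ)^G$: the matrix swapping $e_0$ and $e_1$ and the diagonal matrix $\left[\begin{smallmatrix}1&0\\0&-1\end{smallmatrix}\right]$ are conjugate in $GL_2(\QQ)$, hence have equal trace on $\CC(\QQ)^G$, and those two traces are respectively the number of self-complementary orbits and $X(G,-1)$. You instead apply Burnside's lemma twice: once to obtain the cycle-index evaluation $X(G,-1)=\frac{1}{|G|}\sum_g 2^{c(g)}$, where $c(g)$ is the number of cycles of $g$ and the sum runs over those $g\in G$ all of whose cycles have even length, and once to the extended group $\langle G,t\rangle$ acting on $2^{[n]}$, where the orbit bookkeeping $O=F+(N-F)/2$ correctly cancels the $G$-coset and isolates $F=\frac{1}{|G|}\sum_{g\in G}|\mathrm{Fix}(gt)|$; the alternation-along-cycles analysis then gives $|\mathrm{Fix}(gt)|=2^{c(g)}$ when all cycles of $g$ are even and $0$ otherwise, matching the first computation. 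All steps are sound, including the points that need care: the $G$-action on $2^{[n]}$ is faithful (via singletons), so $|\langle G,t\rangle|=2|G|$; and an odd cycle admits no alternating membership pattern. The paper's argument is shorter and sits naturally beside the $\mathfrak{sl}_2$ and cyclic-sieving machinery it invokes elsewhere; yours is more elementary and yields extra information, namely an explicit formula for the number of self-complementary orbits as a sum over the elements of $G$ all of whose cycles have even length, which makes visible the role of $2$-power-order derangements appearing in Isbell's criterion quoted in Section \ref{not-even}.
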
 

For example, if $G=\langle (1234) \rangle $, then $\overline{S} = \{ \{ 1,3\} ,\{ 2,4\} \}$ is a 
self-complementary orbit. 

We include a proof of Theorem \ref{deBruijn-theorem}, both because it 
is quite elegant and compact, and also so that it may be compared with
our homological approach.

\begin{proof}
The action of $GL_2(\QQ)$ on $V$ induces an action on $\CC(\QQ)$, defined by
\[
h(v_1 \otimes \cdots \otimes v_n)=h(v_1) \otimes \cdots \otimes h(v_n).
\]
This action commutes with the action of $G$, and it follows that $\CC(\QQ)^G$ is stabilized by $GL_2(\QQ)$.

Consider the two elements of $GL_2(\QQ)$,
$$
g=\left[ \begin{matrix} 0 & 1\\
                  1 & 0\\
\end{matrix} \right], \qquad
g'=\left[ \begin{matrix} 1 & 0\\
                  0 & -1\\
\end{matrix} \right].
$$
Note that $g, g'$ are conjugate in $GL_2(\QQ)$. So, they will 
act by conjugates on $\CC(\QQ)^G$, and hence have the same trace on $\CC(\QQ)^G$.

Since $g$ swaps $e_0$ and $e_1$ in each copy of $V$, it swaps
the basis element $e_{\bar{S}}$ in $\CC(\QQ)^G$
with the basis element $e_{\overline{[n]\backslash S}}$; thus 
it acts with trace equalling the number of self-complementary 
$G$-orbits of subsets.

On the other hand, $g'$ is the specialization
to $q=-1$ of the diagonal matrix
$$
g(q)=\left[ \begin{matrix}
                  1 & 0\\
                  0 & q\\
\end{matrix} \right].
$$
Since $g(q)$ acts on $V$ by  sending $e_0$ to itself and $e_1$ to $qe_1$, the induced action
of $g(q)$ on $\CC(\QQ)^G$ has $e_{\bar{S}}$ as an eigenvector 
with eigenvalue $q^{|S|}$.  Hence $g(q)$ acts on
$\CC(\QQ)^G$ with trace $X(G,q)$.  In particular, $g'=g(-1)$ acts with trace $X(G,-1)$.
\end{proof}

Now define the {\it up} and {\it down} maps $U, D: \CC(\FF) \rightarrow \CC(\FF)$
by 
$$
\begin{aligned}
U(e_S)& = \sum_{i \in [n] \backslash S} e_{S \cup \{i\}}\\
D(e_S)& = \sum_{j \in S} e_{S \backslash \{j\}}.
\end{aligned}
$$
Note that $U, D$ both commute with the $G$-action.  As a consequence, they give well-defined
maps on the graded vector spaces $\CC(\FF)^G$ and $\CC(\FF)_G$.  Indeed, $D$ and $U$ map $\CC(\FF)^G_i$ to $\CC(\FF)^G_{i-1}$ and $\CC(\FF)^G_{i+1}$, respectively, and a similar statement holds for coinvariants.  In addition to Theorem \ref{deBruijn-theorem}, the algebraic approach yields the following result.

\begin{theorem} (\cite{Sta0,Proctor})
\label{sl2-theorem}
The polynomial $X(G,q)$ has symmetric, unimodal coefficients.
\end{theorem}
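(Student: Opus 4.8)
The plan is to realize the graded vector space $\CC(\FF)^G$ as a finite-dimensional representation of the Lie algebra $\mathfrak{sl}_2$ and then to invoke the classical fact that the weight-space dimensions of such a representation form a symmetric, unimodal sequence. Since the coefficient of $q^i$ in $X(G,q)$ is $\dim \CC(\FF)_i^G$ by \eqref{orbinv}, and this dimension is independent of the field (the orbit sums $\{e_\Omega\}$ always form a basis), I may work over $\FF=\complexes$, where $\mathfrak{sl}_2$-modules are completely reducible.

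First I would introduce the diagonal operator $H\colon\CC(\complexes)\to\CC(\complexes)$ acting on $\CC(\complexes)_i$ as multiplication by $2i-n$, and claim that $(U,D,H)$ satisfy the defining relations of $\mathfrak{sl}_2$, namely $[H,U]=2U$, $[H,D]=-2D$, and $[U,D]=H$. The first two relations are immediate from the fact that $U$ and $D$ raise and lower the grading by one: applying $H$ after $U$ on $\CC_i$ multiplies by $2(i+1)-n$, whereas applying $H$ first multiplies by $2i-n$, and the difference is $2U$; similarly for $D$. The relation $[U,D]=H$ is the crux, and I would verify it on the basis vectors $e_S$: for $|S|=i$, the computation of $UD(e_S)$ contributes $i\,e_S$ from the terms that remove and then replace the same element, while $DU(e_S)$ contributes $(n-i)\,e_S$ from the terms that add and then delete the same element; all remaining (genuinely off-diagonal) terms, indexed by pairs $(j,k)$ with $j\in S$ and $k\notin S$ and equal to $e_{(S\setminus\{j\})\cup\{k\}}$ in both products, coincide and cancel. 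This leaves $[U,D](e_S)=(2i-n)e_S=H(e_S)$.

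Next I would pass to invariants. Since $U$ and $D$ commute with the $G$-action (as noted in the text) and $H$ acts by a scalar on each $G$-stable graded piece $\CC(\complexes)_i$, all three operators preserve $\CC(\complexes)^G$; hence $\CC(\complexes)^G$ is a finite-dimensional $\mathfrak{sl}_2$-submodule of $\CC(\complexes)$. Its weight spaces are precisely the graded pieces $\CC(\complexes)_i^G$, of weight $2i-n$, so the weights run through $-n,-n+2,\dots,n$ with multiplicities equal to the coefficients of $X(G,q)$. I would then apply the representation theory of $\mathfrak{sl}_2$ over a field of characteristic zero: each irreducible summand contributes a symmetric string of weights, each of multiplicity one, centered at $0$. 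Symmetry of the coefficient sequence follows at once (it also follows elementarily from the $G$-equivariant complementation bijection $\binom{[n]}{i}\to\binom{[n]}{n-i}$). For unimodality I would use that in any such module the raising operator $U$ is injective on weight spaces of negative weight, so that $\dim\CC(\complexes)_i^G\le\dim\CC(\complexes)_{i+1}^G$ whenever $2i-n<0$; combined with the symmetry just established, this forces the sequence to rise to its center and then fall, i.e.\ to be unimodal.

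The main obstacle is the verification of $[U,D]=H$; once the $\mathfrak{sl}_2$-structure is in place, the conclusion is a formal consequence of standard representation theory. A secondary point to handle with care is the field-independence of $\dim\CC(\FF)_i^G$, which legitimizes the reduction to $\FF=\complexes$; without this one would have to worry about whether $\mathfrak{sl}_2$-modules in positive characteristic are semisimple, which they need not be.
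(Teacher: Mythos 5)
Your proposal is correct and follows exactly the route the paper indicates: the paper's one-sentence justification is precisely that $U$ and $D$ generate a copy of $\mathfrak{sl}_2(\complexes)$ inside $\mathfrak{gl}(\CC(\complexes))$ and that the theorem then follows from classical representation theory. You have simply supplied the details (the relation $[U,D]=H$, the passage to invariants, and the field-independence of the dimensions) that the paper leaves to the reader.
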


Indeed, one can verify that the Lie subalgebra of ${\mathfrak g}{\mathfrak l}(\CC(\complexes))$ generated by $U$ and $D$ is isomorphic with ${\mathfrak s}{\mathfrak l}_2(\complexes)$ and obtain Theorem \ref{sl2-theorem} from classical results in representation theory.  

We use $D$ and $U$ for a different purpose, namely, to define the desired chain complexes, which have Euler characteristic $X(G,-1)$.  As mentioned in the introduction, this will allow us to prove that the $q=-1$ phenomenon occurs in the case of partitions in a rectangle without the use of Theorem \ref{deBruijn-theorem}.

\begin{proposition}
\label{four-complexes}
\begin{enumerate} 
\item The maps induced by $D$ on the graded vector spaces $\CC(\FF_2)^G$ and $\CC(\FF_2)_G$ are both chain maps, i.e.,  $D^2=0$ in each
case.  Likewise, the maps induced by $U$ on $\CC(\FF_2)^G$ and $\CC(\FF_2)_G$  are both cochain maps, i.e., $U^2 = 0$. 

\item Each of the  four complexes $(\CC(\FF_2)^G,D)$, $(\CC(\FF_2)_G,D)$, $(\CC(\FF_2)^G,U)$, $(\CC(\FF_2)_G,U)$ has Euler characteristic $X(G,-1)$.
\end{enumerate}
\end{proposition}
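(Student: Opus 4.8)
The plan is to establish both statements by first working in the ambient complex $\CC(\FF_2)$ and then descending to invariants and coinvariants. For part (1), I would first verify directly that $D^2=0$ and $U^2=0$ as endomorphisms of $\CC(\FF_2)$ itself. Expanding, $D^2(e_S)=\sum_{j \in S}\sum_{k \in S \setminus \{j\}} e_{S \setminus \{j,k\}}$, and for each two-element subset $\{j,k\} \subseteq S$ the basis vector $e_{S \setminus \{j,k\}}$ is produced exactly twice, once from the ordered pair $(j,k)$ and once from $(k,j)$. Hence $D^2(e_S)=2\sum_{\{j,k\}\subseteq S} e_{S\setminus\{j,k\}}$, which vanishes over $\FF_2$; the computation for $U^2$ is identical after replacing deletions by insertions. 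This is precisely the point at which the choice of coefficient field $\FF_2$ is essential: over $\QQ$, or any field of characteristic $\neq 2$, the factor of $2$ survives and $D^2,U^2$ are nonzero.

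Next I would observe that $D$ and $U$ commute with the $G$-action (as noted just before the proposition), so they restrict to the invariant subspace $\CC(\FF_2)^G$ and descend to the coinvariant quotient $\CC(\FF_2)_G$: if $v$ is $G$-invariant then so is $D(v)$, since $g(D(v))=D(g(v))=D(v)$, while $D$ carries each relation $u-g(u)$ to $D(u)-g(D(u))$, again a relation, so $D$ acts on the quotient; the same holds for $U$. Because $D^2=0$ holds on all of $\CC(\FF_2)$, the induced maps on the subspace and on the quotient automatically square to zero, yielding the four chain/cochain complexes. This finishes part (1) with no work beyond the ambient computation.

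For part (2), the key observation is that the Euler characteristic of a graded complex depends only on the dimensions of its graded pieces, and by the earlier identity \eqref{orbinv} all four complexes have the same graded dimensions, namely $\dim \CC(\FF_2)_i^G=\dim (\CC(\FF_2)_i)_G=\left|\binom{[n]}{i}/G\right|$ in degree $i$. Here I would first confirm that \eqref{orbinv} is valid over $\FF_2$ and not merely in characteristic $0$: the orbit sums $e_\Omega$ form a basis of the invariants over any field (an invariant vector, being fixed by the permutation action on the $e_S$, must have constant coefficients on each orbit), and the orbit classes $\overline{e_S}$ form a basis of the coinvariants over any field (the coinvariants of a permutation module are free on the set of orbits). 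Granting this, the Euler characteristic of each complex is $\sum_{i=0}^n (-1)^i \left|\binom{[n]}{i}/G\right| = X(G,-1)$ by the definition of $X(G,q)$, and since the grading index $i=|S|$ is the same for the down complexes and the up complexes, the value is identical in all four cases.

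I do not expect a serious obstacle; both parts reduce to short verifications. The only genuinely subtle point is the characteristic-$2$ phenomenon in part (1) — recognizing that the vanishing of $D^2$ and $U^2$ is not a formal fact but depends on working over $\FF_2$ — together with the small amount of care needed to confirm that the basis statements underlying \eqref{orbinv} hold over $\FF_2$, and not only in characteristic coprime to $|G|$.
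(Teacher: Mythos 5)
Your proposal is correct and follows essentially the same route as the paper: the paper identifies $D$ and $U$ mod $2$ with the simplicial boundary and coboundary maps of the full simplex on $[n]$ (which is exactly your ``each codimension-two face appears twice'' computation), notes that the identities persist after passing to $G$-invariants and $G$-coinvariants, and deduces (2) from \eqref{orbinv}. Your additional checks --- that $D,U$ commute with $G$ and that \eqref{orbinv} holds over $\FF_2$ --- are details the paper leaves implicit, not a different argument.
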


\begin{proof}
The maps $D, U$ on $\CC(\FF_2)$ coincide
with the boundary and coboundary maps $\partial_i$ and
$\partial^i$ in the usual simplicial chain complex for a simplex
having vertex set $[n]$. Hence $D^2=U^2=0$, and the same holds
after taking $G$-invariants or $G$-coinvariants.  This gives (1).  Assertion (2) follows from (\ref{orbinv}).
\end{proof}

As above, given a $G$-orbit $\bar{S}$, we set $e_{\bar S}:=\sum_{S' \in \bar{S}}e_{S'}$ and write $\overline{e_S}$ for
the basis element of $\CC(\FF)_G$ corresponding to $\bar{S}$.  For a finite cochain complex $\CC$,  $\CC^\op$ will denote
the opposite chain complex that one obtains by reindexing in the opposite order  and reversing all the arrows.

\begin{proposition} \label{same}
The map sending $e_{\bar S} \mapsto \overline{e_S}$ induces isomorphisms
of complexes
$$
\begin{aligned}
(\CC(\FF_2)^G,U) &\cong \Hom_{\FF_2}((\CC(\FF_2)_G,D), \FF_2)\\
(\CC(\FF_2)^G,D) &\cong \Hom_{\FF_2}((\CC(\FF_2)_G,U), \FF_2).
\end{aligned}
$$
The map sending $e_{\bar S} \mapsto \overline{e_{[n]\backslash S}}$
induces isomorphisms of complexes
$$
\begin{aligned}
(\CC(\FF_2)^G,D) &\cong (\CC(\FF_2)^G,U)^{\op}\\
(\CC(\FF_2)_G,D) &\cong (\CC(\FF_2)_G,U)^{\op}.
\end{aligned}
$$
\end{proposition}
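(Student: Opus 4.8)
The plan is to reduce all four isomorphisms to two structural identities for the operators $U$ and $D$ on the full space $\CC(\FF_2)=V^{\otimes n}$, and then transport these through the $G$-equivariant passage to invariants and coinvariants. Equip $\CC(\FF_2)$ with the symmetric bilinear form $\langle\cdot,\cdot\rangle$ determined by $\langle e_S,e_T\rangle=\delta_{S,T}$. First I would record two facts at this level. (i) The form is $G$-invariant because $G$ permutes the basis $\{e_S\}$, and a direct check on basis elements shows that $U$ and $D$ are mutually adjoint: $\langle Ue_S,e_T\rangle=\langle e_S,De_T\rangle$, each equal to $1$ exactly when $S\subset T$ with $|T|=|S|+1$. (ii) The complementation operator $C(e_S):=e_{[n]\setminus S}$ is a $G$-equivariant linear involution carrying $\CC(\FF_2)_i$ onto $\CC(\FF_2)_{n-i}$, and a one-line computation on basis elements gives $CU=DC$ and $CD=UC$. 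All three of $U$, $D$, $C$ commute with $G$, so each of (i), (ii) descends to invariants and coinvariants.

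For the first pair of isomorphisms I would use the $G$-invariance of $\langle\cdot,\cdot\rangle$ to descend it to a pairing $\CC(\FF_2)_i^G\times(\CC(\FF_2)_i)_G\to\FF_2$, $\langle v,\overline{w}\rangle:=\langle v,w\rangle$; this is well defined since $\langle v,u-gu\rangle=0$ for $G$-invariant $v$. Evaluating on the distinguished bases gives $\langle e_{\bar S},\overline{e_T}\rangle=\delta_{\bar S,\bar T}$, so the Gram matrix is the identity, the pairing is perfect, and it identifies $\CC(\FF_2)^G$ with $(\CC(\FF_2)_G)^*=\Hom_{\FF_2}(\CC(\FF_2)_G,\FF_2)$ via $e_{\bar S}\mapsto(\overline{e_S})^*$; under the self-dual basis $\{\overline{e_S}\}$ this is exactly the stated map $e_{\bar S}\mapsto\overline{e_S}$. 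The adjointness from (i), descended, reads $\langle Uv,\overline{w}\rangle=\langle v,D\overline{w}\rangle$, which says precisely that this identification intertwines $U$ on invariants with the dual $D^*$ on the cochain complex $\Hom_{\FF_2}((\CC(\FF_2)_G,D),\FF_2)$. Hence $(\CC(\FF_2)^G,U)\cong\Hom_{\FF_2}((\CC(\FF_2)_G,D),\FF_2)$, and exchanging the roles of $U$ and $D$ gives the second Hom-isomorphism.

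For the opposite-complex isomorphisms I would descend $C$ to $\CC(\FF_2)^G$ (sending $e_{\bar S}\mapsto e_{\overline{[n]\setminus S}}$) and to $\CC(\FF_2)_G$ (sending $\overline{e_S}\mapsto\overline{e_{[n]\setminus S}}$), which is legitimate since $C$ commutes with $G$. By definition $(\CC(\FF_2)^G,U)^{\op}$ has $\CC(\FF_2)^G_{n-i}$ in homological degree $i$ with $U$ now lowering degree, and $C$ sends degree $i$ to degree $n-i$, so $C$ is degree-preserving into the opposite complex; the relation $CD=UC$ from (ii) is exactly the chain-map condition, and since $C$ is an involution it is an isomorphism $(\CC(\FF_2)^G,D)\cong(\CC(\FF_2)^G,U)^{\op}$. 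The identical argument on coinvariants yields $(\CC(\FF_2)_G,D)\cong(\CC(\FF_2)_G,U)^{\op}$.

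Everything reduces to short verifications on the monomial basis $\{e_S\}$, performed once on $\CC(\FF_2)$ and then carried along. The step I expect to require the most care is the dualization bookkeeping in the second paragraph: one must check that the descended pairing is genuinely perfect in each degree and that the self-dual-basis identification is compatible with the grading and the differentials, so that the transpose of $D$ is realized \emph{on the nose} by $U$ under $e_{\bar S}\mapsto\overline{e_S}$, rather than merely being conjugate to it. Once this matching of bases is pinned down, the remaining three isomorphisms follow formally from (i), (ii), and the symmetry $U\leftrightarrow D$.
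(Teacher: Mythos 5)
Your argument is correct, and it arrives at the four isomorphisms by a somewhat different route than the paper. The paper works directly at the orbit level: it writes out the boundary coefficients $D_{\bar S,\bar T}$ in $\CC(\FF_2)^G$ (the number of $T'\in\bar T$ containing a fixed $S\in\bar S$) and in $\CC(\FF_2)_G$ (the number of $S'\in\bar S$ contained in a fixed $T\in\bar T$), together with the analogous formulas for $U$, and then observes that dualization transposes these matrices while set-complementation, being an inclusion-reversing bijection, exchanges the two counting descriptions. You instead verify two clean identities once on the ambient complex $\CC(\FF_2)$ --- that $U$ and $D$ are adjoint for the form with orthonormal basis $\{e_S\}$, and that the complementation operator $C$ satisfies $CD=UC$ and $CU=DC$ --- and then descend everything formally through the $G$-equivariant passage to invariants and coinvariants, using the perfect pairing $\CC(\FF_2)_i^G\times(\CC(\FF_2)_i)_G\to\FF_2$ with dual bases $\{e_{\bar S}\}$ and $\{\overline{e_S}\}$. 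The content is the same (your Gram-matrix computation $\langle e_{\bar S},\overline{e_T}\rangle=\delta_{\bar S,\bar T}$ is exactly what makes the paper's ``transpose'' statement meaningful), but your packaging buys something: the only nontrivial verifications happen upstairs where they are one-line basis computations, the orbit-counting formulas never need to be written down, and the perfect pairing between invariants and coinvariants is isolated as the structural reason the $\Hom$-duality works over $\FF_2$ without any invertibility hypothesis on $|G|$. The one point you flag as delicate --- that the descended pairing is perfect and realizes $U$ as $D^{*}$ on the nose under $e_{\bar S}\mapsto\overline{e_S}$ --- is handled correctly by your observation that the two distinguished bases are dual to one another.
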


\begin{proof}
Let $\bar{S}, \bar{T}$ be $G$-orbits of subsets with $|T|=|S|+1$.
Then the boundary map coefficient $D_{\bar{S},\bar{T}}$ in $\CC(\FF_2)^G$ is the number of elements
$T'$ in the orbit $\bar{T}$ which {\it contain} the fixed set $S$ in $\bar{S}$.
Meanwhile the boundary coefficient $D_{\bar{S},\bar{T}}$ in $\CC(\FF_2)_G$ is the number of elements
$S'$ in the orbit $\bar{S}$ which are {\it contained in}
the fixed set $T$ in $\bar{T}$. There are similar formulae for the
coefficients $U_{\bar{S},\bar{T}}$ in the two complexes.  The isomorphisms are not
hard to verify, using the fact 
that set-complementation is an inclusion-reversing
bijection.
\end{proof}

In light of the previous proposition,
one may consider any one of the four
complexes, as its homology determines the homology of the others
(either by turning it around in homological degree, or by taking dual $\FF_2$-vector
spaces, or both).

%

\begin{theorem}
\label{odd-acyclic}
The complex $(\CC(\FF_2)^G , D)$ is acyclic 
when $n$ is odd.
More generally, it is  
acyclic whenever
$G$ has at least one orbit of odd cardinality in its action on $[n]$.
If $(\CC(\FF_2)^G, D)$ is not acyclic, then 
$H_0(\CC(\FF_2)^G, D)=\FF_2$ and $H_1(\CC(\FF_2)^G, D)=0$.
\end{theorem}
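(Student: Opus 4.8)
The plan is to treat the acyclicity statements and the non-acyclic case by rather different elementary tools. For acyclicity, I would build an explicit contracting homotopy on $(\CC(\FF_2)^G,D)$ by symmetrizing the standard cone contraction of a simplex. For a single element $v \in [n]$, define $h_v \colon \CC(\FF_2) \to \CC(\FF_2)$ by $h_v(e_S) = e_{S \cup \{v\}}$ if $v \notin S$ and $h_v(e_S)=0$ otherwise. A direct check, splitting into the cases $v \in S$ and $v \notin S$, gives $D h_v + h_v D = \mathrm{id}$ on all of $\CC(\FF_2)$; this is just the contractibility of the simplex on $[n]$. The difficulty is that $h_v$ does not commute with $G$, so it need not descend to the invariant subcomplex.

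To repair this, suppose $G$ has an orbit $\mathcal{O} \subseteq [n]$ of odd cardinality and set $h := \sum_{v \in \mathcal{O}} h_v$. Since conjugation by $g \in G$ sends $h_v$ to $h_{g(v)}$ and $g$ permutes $\mathcal{O}$, the map $h$ commutes with the $G$-action and hence restricts to a degree-raising operator on $\CC(\FF_2)^G$. Summing the identity above over $v \in \mathcal{O}$ yields $Dh + hD = |\mathcal{O}| \cdot \mathrm{id}$, which equals $\mathrm{id}$ over $\FF_2$ precisely because $|\mathcal{O}|$ is odd. Thus the identity map of $\CC(\FF_2)^G$ is null-homotopic and the complex is acyclic. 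When $n$ is odd one may instead take $h = \sum_{v \in [n]} h_v$, which commutes with all of $\Sym_n$; but in any event $n$ odd forces some $G$-orbit to have odd size, so this case is subsumed by the general one.

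For the last statement, I would first observe that the complex fails to be acyclic exactly when every $G$-orbit on $[n]$ has even size: if some orbit were odd the complex is acyclic by the above, and conversely the computation of $H_0$ below exhibits nonzero homology in the even case. So assume the orbits $\mathcal{O}_1, \dots, \mathcal{O}_r$ have even sizes $n_1, \dots, n_r$. Since $D$ sends the orbit sum $e_{\overline{\{v\}}}$ of a point-orbit $\mathcal{O}_i$ to $n_i\, e_\emptyset = 0$ over $\FF_2$, the map $D \colon \CC(\FF_2)^G_1 \to \CC(\FF_2)^G_0$ is identically zero, whence $H_0 = \CC(\FF_2)^G_0 = \FF_2$.

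It remains to show $H_1 = 0$, that is, that $D \colon \CC(\FF_2)^G_2 \to \CC(\FF_2)^G_1$ is surjective; its target is all of $\ker D$ since $D$ already vanishes in degree $1$. I expect this surjectivity to be the main obstacle, and the plan is a degree-parity argument on orbital graphs. For each orbit $\mathcal{O}_i$, the $G$-orbits of $2$-subsets contained in $\mathcal{O}_i$ partition the edge set of the complete graph on $\mathcal{O}_i$ into $G$-invariant graphs, each of which is vertex-transitive on $\mathcal{O}_i$ and therefore regular and spanning. Their degrees sum to $n_i - 1$, which is odd, so at least one such orbit $\Pi$ is a regular graph of odd degree $d$. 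The boundary $D(e_\Pi) = \sum_{\{a,b\} \in \Pi}(e_{\{a\}} + e_{\{b\}})$ has the coefficient of $e_{\{v\}}$ equal to the degree of $v$ in $\Pi$ modulo $2$; this is $1$ for $v \in \mathcal{O}_i$ and $0$ otherwise, so $D(e_\Pi) = e_{\overline{\{v\}}}$ for any $v \in \mathcal{O}_i$. As the classes $e_{\overline{\{v\}}}$, with $v$ ranging over representatives of the point-orbits, form a basis of $\CC(\FF_2)^G_1$, the map $D$ is onto and $H_1 = 0$.
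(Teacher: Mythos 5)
Your proposal is correct and follows essentially the same route as the paper: your operator $h=\sum_{v\in\mathcal{O}}h_v$ is exactly the paper's $S$-masked up map $U^{(S)}$ for an odd orbit $S=\mathcal{O}$, with the same homotopy identity $Dh+hD=|\mathcal{O}|\cdot\mathrm{id}$, and your $H_0$, $H_1$ computation is the paper's direct inspection of the low-degree boundary maps (your odd-degree orbital graph is the paper's observation that the boundary coefficients from pair-orbits inside an orbit $X$ sum to $|X|-1$). The only cosmetic differences are that you derive the homotopy identity by summing single-vertex cone contractions and phrase the degree-2 surjectivity via vertex-transitivity and regularity, which is a slightly more structured way to say the same thing.
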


\begin{proof}
Let $S$ be the set of elements of $[n]$ belonging to a $G$-orbit of odd size, which in 
particular ensures that $S$ is $G$-stable
(although not necessarily pointwise fixed
by $G$).  Then one forms {\it $S$-masked} version of the up map in $\CC(\FF_2)$
as follows:
$$
\begin{aligned}
U^{(S)}(e_T) &:=\sum_{i \in S \backslash T} e_{T \cup \{i\}} 
\end{aligned}
$$

The $G$-stability of $S$ implies that $U^{(S)}$ commutes with the $G$-action.  Indeed, the crucial point in all the requisite calculations is that
$g(S)=S$, so that, for example,
$$
\begin{aligned}
S \backslash g(T) &= g(S \backslash T),\\
S \cap g(T) &= g(S \cap T).
\end{aligned}
$$
Hence one gets an induced $S$-masked up map $U^{(S)}$
on $\CC(\FF_2)^G$.

A straightforward calculation shows that
$$
\begin{aligned}
(D \cdot U^{(S)}- U^{(S)} \cdot D)(e_T)
&= (|S\backslash T|-|S \cap T|) \cdot e_T \\
&= (|S|-2|S \cap T|) \cdot e_T.
\end{aligned}
$$
As we are working in characteristic two, this gives
$$
D \cdot  U^{(S)} + U^{(S)}\cdot D = |S|\cdot I.
$$

Thus when $|S|$ is odd, the $S$-masked up map $U^{(S)}$ gives
an algebraic chain-contraction, showing that $(\CC(\FF_2)^G,D)$ is acyclic.  
%

Next we verify the assertions about $H_0$ and $H_1$ by closely examining 
the first 
few boundary maps in $(\CC(\FF_2)^G,D)$.

The boundary map $D$ out of $\CC(\FF_2)_0^G=\FF_2$ is {\it always} the
zero map, regardless of $G$.  If all $G$-orbits have even cardinality,
the boundary map $D$ out of  $\CC(\FF_2)_1^G$ will also be the zero map, so the assertion
about $H_0$ follows.

It remains to show that when all $G$-orbits have even cardinality, the map $D$ out of
$\CC(\FF_2)_2^G$ is surjective.  But for this we can work within each $G$-orbit $X$ on $[n]$.
That is, it suffices to show that there is some $G$-orbit $Y=\overline{\{i,j\}}$ 
of pairs with
$i,j \in X$ for which $D(e_Y)$ has coefficient $1$ on $e_X$, not zero.  However, fixing $X$,
one can see that the sum of all of such boundary map coefficients
incident to $X$ and {\it coming from $G$-orbits of pairs contained in $X$}
will be $|X|-1$, an odd number.  Thus
one of them must be non-zero in $\FF_2$, as desired.
\end{proof}


It follows from Theorem \ref{deBruijn-theorem} and Proposition~\ref{four-complexes}(2) that the Euler characteristic of all of the complexes under consideration is non-negative.  Given this fact and the calculations of $H_0, H_1$ in Theorem~\ref{odd-acyclic},
one might be tempted to make the conjecture (true up through $n=5$)
that the homology of $\CC(\FF_2)^G$ is always concentrated in even dimensions.
However, we shall see in Section ~\ref{not-even}  that this is not the case.  In Section \ref{rect}
we will nonetheless confirm this behavior for wreath products of symmetric groups in their natural imprimitive actions, and in Section 
~\ref{not-even} we provide evidence that this holds for cyclic groups as well.

\section{A Morse matching lemma} \label{morse-sec}


In this section we prove Lemma \ref{Morse-lemma}, which is an algebraic version of the main result in R. Forman's discrete Morse theory.   Similar results appear in the work J\"ollenbeck-Welker and that of Sk\"oldberg (see \cite{JW} and \cite{Sk}), in which much more extensive versions of algebraic discrete Morse theory are developed.  Lemma \ref{Morse-lemma} will be applied in Sections \ref{rect}  and \ref{plane} to obtain homology bases for chain complexes under consideration, and thereby compute their Euler characteristics.  

\begin{definition}
Let $P$ be a graded poset and, for $i \geq 0$, let $P_i$ be the set of elements of rank $i$ in $P$.  Let $(\CC,d)$ be a chain complex of $\FF$-vector spaces $\CC_i$ such that $\CC_i \neq 0$ if and only if $P_i \neq \emptyset$.  We say that $P$ {\it supports} $(\CC,d)$ if each $\CC_i$ has basis (indexed by) $P_i$ and, for $p \in P_i$ and $q \in P_{i-1}$, the coefficient
$d_{p,q}$  of $q$ in $d_i(p)$ is zero unless $q<_P p$.

We consider the Hasse diagram of $P$ as a graph.  A (partial) matching $M$ on this Hasse diagram $H$ is said to be 
an {\it acyclic matching}, or a {\it Morse matching}, if the digraph $D(P,M)$, obtained by first directing all edges in $H$ downward and then reversing all the directions of the edges in $M$, is acyclic (see e.g. \cite{Ch}, \cite{BBLSW}, \cite{He}).

Given an acyclic matching $M$ on $P$, let the subsets
$$
P^{\matched}, P^{\unmatched}, P^{\matched}_i, P^{\unmatched}_i
$$
respectively denote the $M$-matched and $M$-unmatched elements in $P$, and the same sets
restricted to rank $i$. The elements of $P^{\unmatched}$ are called {\it critical elements}.
We write $q=M(p)$ if the edge $\{p,q\}$ of $H$ lies in $M$.
We write $<_{D(P,M)}$ for the partial order on $P$ that results from taking the transitive closure of $D(P,M)$.
\end{definition}

\begin{lemma}
\label{Morse-lemma}
Let $P$ be a graded poset supporting an algebraic complex $(\CC,d)$ and assume $P$ has
a Morse matching $M$ such that for  all $q=M(p)$ with $q < p$, one has $d_{p,q} \in \FF^\times$.
Let $Q_i$ be the set of all $p \in P_i^M$ such that $M(p) \in P_{i-1}$.  Then
\begin{enumerate}
\item[(i)] $\dim H_i(\CC,d) \leq |P^{\unmatched }_i|$.
\item[(ii)] If $|Q_i|  = rank (d_i)$ for every $i$, then
$\dim H_i = |P^{\unmatched }_i| $.  (For example, this condition is met
whenever all unmatched poset elements live in 
ranks of the same parity.)
\item[(iii)] If $d_{q,p} = d_{p,r} = 0$ for all $p\in P^{\unmatched } $ and all $q,r\in P$, 
then the homology $H(\CC,d)$ has $\FF$-basis $\{e_p: p \in P^{\unmatched}\}$.
\end{enumerate}
\end{lemma}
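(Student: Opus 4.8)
The plan is to funnel all three parts through a single rank estimate, namely that an acyclic matching forces $\rank(d_i) \geq |Q_i|$, and then to read off (i)--(iii) by linear-algebra bookkeeping rather than by the full iterated-reduction machinery of \cite{JW} and \cite{Sk}. First I would record the elementary identity $\dim H_i(\CC,d) = \dim\CC_i - \rank(d_i) - \rank(d_{i+1})$, together with the combinatorics of the matching: since $P$ is graded and every matched Hasse edge joins consecutive ranks, the matched elements of rank $i$ split into the upper endpoints $Q_i$ (partner in rank $i-1$) and the lower endpoints $M(Q_{i+1})$ (partner in rank $i+1$). Hence $|P^{\matched}_i| = |Q_i| + |Q_{i+1}|$ and $|P^{\unmatched}_i| = \dim\CC_i - |Q_i| - |Q_{i+1}|$.

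The heart of the argument is the inequality $\rank(d_i) \geq |Q_i|$, and this is the step I expect to be the main obstacle, since it is where acyclicity of $M$ is used essentially and where the index and direction conventions must be set up with care. I would consider the square submatrix $A$ of $d_i$ with columns indexed by $p \in Q_i$ and rows indexed by the partners $M(p) \in P_{i-1}$, so that the $(M(p'),p)$-entry is $d_{p,M(p')}$. The diagonal entries $d_{p,M(p)}$ are units by hypothesis. For an off-diagonal entry $d_{p,M(p')} \neq 0$ with $p \neq p'$, the support condition gives $M(p') <_P p$; since the edge $\{p,M(p')\}$ is unmatched (it is the edge $\{M(p'),p'\}$ that is matched), the digraph $D(P,M)$ contains the directed path $p \to M(p') \to p'$. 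Acyclicity makes reachability in $D(P,M)$ a genuine strict order, so ordering $Q_i$ by any linear extension of $<_{D(P,M)}$ renders $A$ triangular with unit diagonal, hence invertible over $\FF$; this gives $\rank(d_i) \geq |Q_i|$.

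Given this, (i) is immediate: $\dim H_i = \dim\CC_i - \rank(d_i) - \rank(d_{i+1}) \leq \dim\CC_i - |Q_i| - |Q_{i+1}| = |P^{\unmatched}_i|$. Part (ii) follows at once when $\rank(d_i) = |Q_i|$ for all $i$, turning the inequality into equality. To see that equal-parity critical cells force that hypothesis, I would use the telescoping identity $\sum_i (-1)^i |P^{\unmatched}_i| = \sum_i (-1)^i \dim\CC_i = \sum_i (-1)^i \dim H_i$ (the $|Q_i|+|Q_{i+1}|$ cross-terms cancel): if all critical cells lie in ranks of one parity, then (i) makes $\dim H_i = 0$ in the opposite parity, and the surviving alternating sum together with the termwise bounds $\dim H_i \leq |P^{\unmatched}_i|$ forces $\dim H_i = |P^{\unmatched}_i|$ in every rank; feeding this back through $\dim H_i = \dim\CC_i - \rank(d_i) - \rank(d_{i+1})$ and $\rank(d_i) \geq |Q_i|$ then yields $\rank(d_i) = |Q_i|$.

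Finally, for (iii) I would split the complex. Because $d(e_p) = 0$ for every critical $p$ (all $d_{p,r} = 0$) and $d_{q,p} = 0$ for every critical $p$ and every $q$, the graded subspaces spanned respectively by the critical $e_p$ and by the matched $e_q$ are each $d$-stable, giving a direct-sum decomposition of complexes. The critical summand carries the zero differential, so its homology is itself with basis $\{e_p : p \in P^{\unmatched}\}$. The matched summand is acyclic: the bound $\rank(d_i) \geq |Q_i|$ applies verbatim to its restricted differential, while its rank-$i$ piece has dimension exactly $|Q_i| + |Q_{i+1}|$, so its $i$-th homology has dimension at most $(|Q_i|+|Q_{i+1}|) - |Q_i| - |Q_{i+1}| = 0$. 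Hence $H(\CC,d)$ equals the homology of the critical summand and has the asserted $\FF$-basis.
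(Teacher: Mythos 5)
Your proof is correct and, for parts (i) and (ii), follows exactly the paper's route: the invertible triangular submatrix of $d_i$ with columns $Q_i$ and rows $M(Q_i)$, ordered by a linear extension of $<_{D(P,M)}$, gives $\rank(d_i)\ge|Q_i|$, and the dimension count $\dim H_i=|P_i|-\rank(d_i)-\rank(d_{i+1})$ does the rest. The only divergences are minor: you actually prove the parenthetical claim in (ii) via the Euler-characteristic telescoping (the paper merely asserts it), and for (iii) you split $\CC$ into a direct sum of the critical subcomplex (zero differential) and the matched subcomplex (acyclic by the same rank bound), whereas the paper argues directly that the critical $e_p$ are cycles, are linearly independent in homology because no boundary has nonzero coefficient on a critical cell, and span by the count from (i); both versions of (iii) are valid and rest on the same hypotheses.
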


\begin{proof}
To prove (i), we will show that the matrices of the boundary maps $d_i$ have sufficiently large
ranks  by showing that they have large, nonsingular square submatrices.
Order $Q_i$ (as defined in the statement of this lemma) 
as $q_1,\ldots,q_r$ by any linear extension of the partial order $<_{D(P,M)}$.  We claim that the
square submatrix of $d_i$ having columns indexed $q_1,,\ldots,q_r$ and
rows indexed by  $M(q_1),\ldots,M(q_r)$ is invertible upper-triangular.

To prove the claim, note that the hypothesis $d_{p,q}\in \FF^{\times } $ for $q < p$ 
implies that the diagonal entries
of this square matrix are all in $\FF^\times$, since $q=M(p)$ implies $q<p$ or $p<q$.
Hence one only needs to verify
upper-triangularity.  So assume that the boundary map $d_i$ has $(d_i)_{q_j,M(q_k)} \neq 0$
for some $k \neq j$.
Since the complex $\CC$ is supported on $P$, we see that $q_j >_P M(q_k)$ and hence
$D(P,M)$ has a directed edge $q_j \rightarrow M(q_k)$.  There is also the matching edge in $D(P,M)$,
directed upward as $M(q_k) \rightarrow q_k$, and thus by transitivity, $q_j <_{D(P,M)} q_k$.  Hence $j < k$, yielding the claim.

The claim implies that $\rank (d_i) \geq |Q_i|$ for all $i$.  As usual letting 
$Z_i = \ker d_i$ and $B_i = \im d_{i+1}$, note that
$$
\begin{aligned}
\dim H_i & = \dim Z_i - \dim B_i \\
         & = |P_i| - (\rank (d_i) + \rank (d_{i+1}) ) \\
         &\leq |P_i| - (|Q_i| + |Q_{i+1}|) \\
         & =    |P_i| - |P^{\matched}_i|\\
         & = |P^{\unmatched}_i|
\end{aligned}
$$
as desired.

Now the hypothesis in (ii) makes the weak inequality into an equality in the above string of 
equalities and weak inequalities, implying the desired equality in (ii).  To prove (iii), note that
the hypothesis $d_{p,r} = 0$ for all $r<p$
ensures that $ \{ e_p|p\in P_i^{\unmatched } \} $ is contained in the kernel of
$d_i$ for each $i $, while $d_{q,p} = 0$ for all $q>p$ implies linear independence of
this set in $H_i(\CC,d)$.  Since $|P_i^{\unmatched }| \ge \dim H_i (\CC, d)$, this set also must span $H_i(\CC, d)$, hence
is a homology basis.
\end{proof}







\section{Application to partitions in a rectangle}\label{rect}


Let $P(k,\ell)$ be the set of integer partitions  (see for example \cite{And})  
whose Ferrers diagrams lie
inside a $k\times \ell$ rectangle. Any $\lambda\in P(k,\ell)$ may be written as
$\lambda=(\lambda_1,\ldots,\lambda_k)$ with
$$
\ell \geq \lambda_1 \geq \cdots \geq \lambda_k \geq 0.
$$
$P(k,\ell)$ is a ranked poset under inclusion of Ferrers diagrams. The 
rank generating function of $P(k,\ell)$ is given by the $q$-binomial coefficient
$$
X(k,\ell,q)=
\left[ \begin{matrix} k+\ell\\k
\end{matrix}\right]_q.
$$
It is well-known that the number of self-complementary partitions 
in $P(k,\ell)$ is $X(k,\ell,-1).$  Here we will prove this fact by applying Proposition \ref{four-complexes}(2) and Lemma \ref{Morse-lemma}, thus avoiding the use of Theorem \ref{deBruijn-theorem}.


First let us check that Proposition \ref{four-complexes} may be applied.  Consider the partition of the set $C$ of all $k\ell$ boxes in the $k \times \ell$ rectangle into $k$ subsets of size $\ell$, each subset consisting of the boxes in a given row of the rectangle. The wreath product $G=\Sym_\ell \wr \Sym_k$ is the full stabilizer of this partition in $\Sym_C$.  An element of $G$ first permutes rows wholesale and then permutes boxes within each row independently.  It is not hard to see that each $G$-orbit on the set of subsets of $C$ contains (the Ferrers diagram of) a unique partition and that $(\CC(\FF_2)^G,D)$ is supported on $P(k,\ell)$.

We will use Lemma \ref{Morse-lemma} to show that the homology of $(\CC(\FF_2),D)$ is concentrated in even dimensions and exhibit a basis for the homology.  Then we will describe a bijection between this basis and the set of self-complementary partitions in $P(k,\ell)$.

One may check directly that the entries in the boundary
maps $D$
take the following form. 
\begin{proposition}
\label{wreathdown}
If $\mu$ is obtained from $\lambda$
by reducing the part $\lambda_i$ to $\lambda_i-1$, then
\begin{equation}
\label{wreath-boundary-coefficients}
D_{\lambda,\mu} = (\ell - \lambda_i + 1) (\mult_\lambda(\lambda_{i}-1)+1)
\end{equation}
where $\mult_\lambda(s)$ is the multiplicity of the part $s$ in $\lambda$.
\end{proposition}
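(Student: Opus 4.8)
The plan is to compute the boundary coefficient $D_{\lambda,\mu}$ directly from its orbit-theoretic interpretation, established in Proposition \ref{same}. Recall that in the complex $(\CC(\FF_2)^G, D)$, the coefficient $D_{\bar S, \bar T}$ (with $|S| = |T|+1$) counts, modulo $2$, the number of elements $T'$ in the orbit $\bar T$ that are contained in a fixed representative $S$ of $\bar S$. Here I identify each $G$-orbit on subsets of boxes with the unique partition it contains, so $\lambda$ plays the role of $S$ and $\mu$ (its down-neighbor) plays the role of $T$. The task is therefore to count the number of Ferrers-diagram-shaped subsets $T'$ lying in the $G$-orbit of $\mu$ that are contained in the fixed Ferrers diagram of $\lambda$, where $G = \Sym_\ell \wr \Sym_k$ acts by permuting rows wholesale and permuting boxes within each row.

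First I would fix the Ferrers diagram of $\lambda$ with rows of lengths $\lambda_1 \geq \cdots \geq \lambda_k$ and ask: which subsets $T'$ in the $G$-orbit of $\mu$ sit inside it? An element of the orbit $\bar\mu$ is obtained from $\mu$'s diagram by (a) permuting the $k$ rows and (b) choosing, within each row, which $\mu_i$ of the $\ell$ boxes to occupy. For such a $T'$ to fit inside $\lambda$'s diagram, the boxes chosen in a row of $\lambda$-length $\lambda_j$ must all lie among those $\lambda_j$ boxes. Since $\mu$ differs from $\lambda$ by lowering exactly one part from $\lambda_i$ to $\lambda_i - 1$, the multiset of row lengths of $\mu$ is that of $\lambda$ with one copy of $\lambda_i$ replaced by $\lambda_i - 1$. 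I would count assignments of $\mu$'s rows to $\lambda$'s rows so that each assigned row still fits, and then count the box-choices within each row. The rows of $\mu$ with length equal to some $\lambda_j$ must map to rows of $\lambda$ of at least that length and occupy them fully (forcing a unique box-choice there), while the single short row of length $\lambda_i - 1$ has freedom: it can be placed in a row of $\lambda$ of length $\lambda_i - 1$ (fully, one box-choice) or in a row of length $\geq \lambda_i$ with $\binom{\text{length}}{\lambda_i - 1}$ choices.

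The key combinatorial bookkeeping is to sort the contributions by where the shortened row lands and to collapse the sum modulo $2$. I expect the two factors in \eqref{wreath-boundary-coefficients} to arise as follows: the factor $(\ell - \lambda_i + 1)$ should count, within the row of $\lambda$ that actually had its box removed, the $\ell - \lambda_i + 1$ positions among the $\ell$ available that the now-length-$(\lambda_i - 1)$ configuration could occupy while still being a valid sub-configuration — essentially the number of ways the removed box could have been the ``missing'' one. The factor $(\mult_\lambda(\lambda_i - 1) + 1)$ should count the number of rows of $\lambda$ (namely those of length exactly $\lambda_i - 1$, of which there are $\mult_\lambda(\lambda_i - 1)$, together with the distinguished row that was shortened) into which the short row of $\mu$ can be slotted so that the remaining rows tile the rest of $\lambda$ in exactly one way each. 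The main obstacle will be the careful modular accounting: many placements of the full-length rows of $\mu$ into equal-length rows of $\lambda$ come in matched families that cancel in $\FF_2$, and I must verify that exactly the claimed residue survives rather than some other parity. I would organize this by grouping rows of $\lambda$ of equal length and arguing that permutations among equal-length full rows contribute an even count except for the essential placements tracked by the two stated factors, so that the product $(\ell - \lambda_i + 1)(\mult_\lambda(\lambda_i - 1) + 1)$ emerges as the surviving coefficient. (Since the proposition is stated as an integer equality ``one may check directly,'' I would in fact verify the exact integer count rather than only its parity, which removes the cancellation subtlety but requires the full enumeration of fitting sub-configurations.)
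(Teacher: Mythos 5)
Your opening step misquotes the orbit-theoretic description of the boundary coefficients, and the error is fatal to the rest of the plan. By Proposition \ref{same}, for the invariant complex $(\CC(\FF_2)^G,D)$ the coefficient of the smaller orbit in the boundary of the larger one is the number of elements of the \emph{larger} orbit $\bar\lambda$ that \emph{contain} a fixed representative of the smaller orbit $\bar\mu$; what you describe --- the number of elements of $\bar\mu$ contained in a fixed copy of $\lambda$ --- is the coefficient in the \emph{coinvariant} complex $\CC(\FF_2)_G$. These are genuinely different integers, and they can differ mod $2$: your count equals $\lambda_i\cdot\mult_\lambda(\lambda_i)$, since an element of $\bar\mu$ inside the Ferrers diagram of $\lambda$ is just that diagram with one box deleted from a row of length $\lambda_i$. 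For $k=\ell=2$, $\lambda=(2,1)$, $\mu=(2,0)$ this gives $1$, whereas \eqref{wreath-boundary-coefficients} gives $(2-1+1)(0+1)=2$, which is indeed the number of elements of $\bar\lambda$ containing the fixed diagram of $\mu$. So the quantity you set out to enumerate is not $D_{\lambda,\mu}$ for the complex used in Section \ref{rect}.

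With the correct description the verification is immediate and none of your mod-$2$ cancellation bookkeeping is needed: an element of $\bar\lambda$ containing the left-justified diagram $F(\mu)$ must equal $F(\mu)\cup\{b\}$ for a single box $b$, and comparing row-length multisets forces $b$ to lie in a row of $F(\mu)$ of length $\lambda_i-1$; there are $\mult_\mu(\lambda_i-1)=\mult_\lambda(\lambda_i-1)+1$ such rows, and within each of them $b$ may occupy any of the $\ell-\lambda_i+1$ columns $\lambda_i,\dots,\ell$. This yields \eqref{wreath-boundary-coefficients} exactly as an integer. A secondary problem with your plan, even had the containment direction been right: elements of an orbit are subsets, not labelled placements, so counting ``assignments of $\mu$'s rows to $\lambda$'s rows'' together with box choices would overcount each subset by the order of its stabilizer, and the elaborate parity cancellation you anticipate has no counterpart in the actual (very short) enumeration.
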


In order to define our Morse matching $M$ on $P(k,\ell)$, we need to specify two parts of each partition therein. Given $\lambda \in P(k,\ell)$,
\begin{itemize}
\item[(1)] $A(\lambda)$ is the last nonzero part of $\lambda$ having the same parity
as $\ell$, if such a part exists.  If no such part exists, then
$A(\lambda)=\infty$.  Also,

\item[(2)] $B(\lambda)$ is the first occurrence of the smallest part of $\lambda$
having odd multiplicity and opposite parity to $\ell$, if such a part
exists.    If no such part exists, $B(\lambda)=\infty$.
\end{itemize}

Note that while $A(\lambda)>0$ always holds, it is possible that $B(\lambda)=0$.

Now we define $M(\lambda)$ for each $\lambda \in P(k,\ell)$ satisfying 
$$
min\{A(\lambda),B(\lambda)\}<\infty.
$$
We will see that $M$ determines a partial matching on $P(k,\ell)$.

\begin{definition} Suppose that $\lambda\in P(k,\ell)$ and $min\{A(\lambda),B(\lambda)\}<\infty.$
The matching $M(\lambda)=\mu$ is defined as follows.
\begin{enumerate}
\item If $A(\lambda)< B(\lambda)$, then $ \mu$ is obtained from $\lambda$ 
by replacing $A(\lambda)$ with $A(\lambda)-1$. 
\item If $A(\lambda)>B(\lambda)$, then $ \mu$ is obtained from $\lambda$ 
by replacing $B(\lambda)$ with $B(\lambda)+1$.
\end{enumerate}
\end{definition}

Note that $A(\lambda)$ and $B(\lambda)$ have opposite parities if they are both finite and are therefore distinct.
Since $A(\lambda)>0$ and $B(\lambda)$ has the opposite parity to the largest allowed part $\ell$, we see that $M(\lambda) \in P(k,\ell)$ whenever $\min\{A(\lambda),B(\lambda)\}<\infty$. So the function $M$ is a well-defined on its domain, and $M(\lambda)$ either covers or is covered by $\lambda$ in $P(k,\ell)$.

Next we check that $M$ is an involution on $P(k,\ell).$
If $A(\lambda)<B(\lambda),$ then 
$A(M(\lambda))\ge A(\lambda)$ and $B(M(\lambda))=A(\lambda)-1,$  because the parts of size  
$A(\lambda)-1$ have odd multiplicity in $M(\lambda)$. Thus applying $M$ again we see that
$M(M(\lambda))=\lambda.$ If $A(\lambda)>B(\lambda),$ then 
$B(M(\lambda))\ge B(\lambda)+2$, and 
$A(M(\lambda))=B(\lambda)+1$, so $M(M(\lambda))=\lambda.$


The $M$-unmatched partitions are those such that $A(\lambda)=B(\lambda)=\infty.$

\begin{proposition}
\label{unmatched}
The $M$-unmatched partitions $\lambda \in P(k,\ell)$ are exactly those satisfying
\begin{enumerate}
\item every nonzero part of $\lambda$ has even multiplicity and opposite parity to $\ell$, and \item if $\ell$ is odd then $0$ has even multiplicity in $\lambda$. \end{enumerate}
\end{proposition}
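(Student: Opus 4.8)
The plan is to characterize the $M$-unmatched partitions directly from the definition of $M$. Recall that $\lambda$ is unmatched precisely when $\min\{A(\lambda),B(\lambda)\}=\infty$, i.e., when both $A(\lambda)=\infty$ and $B(\lambda)=\infty$. So the task reduces to translating the two conditions ``$A(\lambda)=\infty$'' and ``$B(\lambda)=\infty$'' into the combinatorial conditions stated in the proposition. First I would unpack $A(\lambda)=\infty$: by definition $A(\lambda)$ is the last nonzero part of $\lambda$ having the same parity as $\ell$, so $A(\lambda)=\infty$ means there is \emph{no} nonzero part having the same parity as $\ell$. Equivalently, every nonzero part of $\lambda$ has parity opposite to $\ell$. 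This already yields the ``opposite parity'' half of condition (1).

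Next I would unpack $B(\lambda)=\infty$. By definition $B(\lambda)$ is the first occurrence of the smallest part of $\lambda$ having odd multiplicity and parity opposite to $\ell$; setting $B(\lambda)=\infty$ means no part $s$ simultaneously has odd multiplicity in $\lambda$ and parity opposite to $\ell$. Here I must be careful about the role of the part $0$, since the definition allows $B(\lambda)=0$, so the value $s=0$ is a legitimate candidate for $B$. I would therefore split the analysis according to the parity of $\ell$. If $\ell$ is even, then $0$ has parity opposite to $\ell$, but the interesting interaction is with the nonzero parts; combining $A(\lambda)=\infty$ (all nonzero parts have opposite parity to $\ell$) with $B(\lambda)=\infty$ (no part of opposite parity has odd multiplicity) forces every nonzero part to have even multiplicity, giving condition (1) in full. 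If $\ell$ is odd, then $0$ has parity opposite to $\ell$ as well, so the requirement that no opposite-parity part have odd multiplicity now \emph{also} constrains the multiplicity of $0$, forcing $0$ to have even multiplicity; this is exactly condition (2), which is stated only in the odd case.

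The key bookkeeping step is to verify that the two conditions $A=\infty$ and $B=\infty$ are not merely necessary but jointly equivalent to conditions (1)--(2), i.e., that the implications run both ways. For the converse direction I would assume a partition satisfies (1) (and (2) when $\ell$ is odd) and check that then no part can serve as $A(\lambda)$ or as $B(\lambda)$: condition (1)'s parity clause kills all candidates for $A$, and its even-multiplicity clause (together with (2) handling the part $0$ in the odd case) kills all candidates for $B$. The one subtlety to handle explicitly is the asymmetry between the even and odd cases in how the part $0$ is treated, which is precisely why condition (2) appears conditionally.

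I expect the main (though modest) obstacle to be the careful treatment of the part $0$ and its multiplicity. Since $\lambda \in P(k,\ell)$ is written with exactly $k$ parts $\lambda_1 \ge \cdots \ge \lambda_k \ge 0$, trailing zeros count toward the multiplicity of $0$, and whether $0$ qualifies as a $B$-candidate depends on the parity of $\ell$. Keeping straight the distinction between ``nonzero parts'' (governed by condition (1)) and the part $0$ (governed by condition (2), only when $\ell$ is odd) is where the argument requires genuine attention; everything else is a direct unwinding of the definitions of $A$ and $B$.
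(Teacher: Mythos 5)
Your overall strategy is exactly right and is the same as the paper's (the paper in fact gives no explicit proof, merely noting that the unmatched partitions are those with $A(\lambda)=B(\lambda)=\infty$ and then stating the proposition): unwind $A(\lambda)=\infty$ to get the parity clause of (1), unwind $B(\lambda)=\infty$ to get the multiplicity clauses, and check the converse. However, there is a concrete error in your case analysis of the part $0$. You write that if $\ell$ is even then $0$ has parity opposite to $\ell$; this is false, since $0$ is even and hence has the \emph{same} parity as an even $\ell$. The correct statement --- and the entire reason condition (2) is conditional on $\ell$ being odd --- is that $0$ has parity opposite to $\ell$ precisely when $\ell$ is odd. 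When $\ell$ is even, $0$ is simply never a candidate for $B(\lambda)$, so $B(\lambda)=\infty$ places no constraint on the multiplicity of $0$; for example, with $\ell=2$, $k=3$, the partition $(1,1,0)$ has $0$ with odd multiplicity and is nonetheless unmatched. If your claim were taken at face value, $B(\lambda)=\infty$ would force $0$ to have even multiplicity for even $\ell$ as well, which would contradict the proposition.

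You escape this because you immediately deflect to ``the interesting interaction is with the nonzero parts'' and never actually use the false parity claim, so the conclusions you draw in both cases are correct. But as written the even-$\ell$ branch rests on a wrong premise and then silently ignores it. Replace that sentence with: when $\ell$ is even, $0$ has the same parity as $\ell$ and so cannot serve as $B(\lambda)$; when $\ell$ is odd, $0$ has opposite parity to $\ell$ and so $B(\lambda)=\infty$ additionally forces the multiplicity of $0$ to be even. With that correction the proof is complete and matches the intended argument.
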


Note that if $k$ and $\ell$ are both odd, $P^{\unmatched}=\varnothing$.  Indeed, if $\ell$ is odd and
a critical cell  $\mu$ of $M$ exists, then $\mu$ has $k$ parts (including 0), 
all of which are even, with even multiplicity. So $k$ must be even.

\begin{theorem}
\label{rectangle-thm}
The matching $M$ on the poset $P=P(k,\ell)$ is acyclic, with
the partitions $\lambda $ in $P^{\unmatched}$  being those 
described in Proposition~\ref{unmatched}.
Moreover, the homology of $\CC((\mathbb{F}_2)^G,D)$ 
is concentrated in even dimensions. 
\end{theorem}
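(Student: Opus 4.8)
The plan is to deduce the theorem from Lemma~\ref{Morse-lemma}(ii), applied to the poset $P=P(k,\ell)$ and the complex $(\CC(\FF_2)^G,D)$ that we already know is supported on $P$. To invoke that lemma I must check three things: that $M$ is acyclic, that each matched covering has boundary coefficient a unit in $\FF_2$ (i.e.\ odd), and that every critical (unmatched) partition lies in an even rank. The second and third are short; acyclicity is the crux.

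For the coefficients, realize a matched pair as a covering, so that $\mu=M(\lambda)$ comes from reducing a part equal to $A(\lambda)$ (case (1)). By Proposition~\ref{wreathdown}, $D_{\lambda,\mu}=(\ell-A(\lambda)+1)(\mult_\lambda(A(\lambda)-1)+1)$. Here $\ell-A(\lambda)$ is even since $A(\lambda)\equiv\ell\pmod 2$, and $\mult_\lambda(A(\lambda)-1)$ is even since $A(\lambda)-1$ has opposite parity to $\ell$ and is strictly less than $B(\lambda)$, hence not a value of odd multiplicity. Both factors are odd, so $D_{\lambda,\mu}=1$ in $\FF_2$; every matched covering, read from its top, has this form. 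For the ranks, Proposition~\ref{unmatched} gives that each nonzero part of an unmatched $\lambda$ has even multiplicity, so $|\lambda|=\sum_{j>0}j\cdot\mult_\lambda(j)$ is a sum of even integers and is even. Thus all critical cells lie in even ranks, which is exactly the sufficient condition named in Lemma~\ref{Morse-lemma}(ii); granting acyclicity, the lemma yields $\dim H_i=|P^{\unmatched}_i|=0$ for odd $i$, i.e.\ homology concentrated in even dimensions.

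It is convenient to view $\lambda$ through its multiplicity vector $(\mult_\lambda(0),\mult_\lambda(1),\dots)$, so that a covering transfers one unit between consecutive values, while the matching move transfers one unit across $v(\lambda)=\min\{A(\lambda),B(\lambda)\}$, downward (from $A(\lambda)$ to $A(\lambda)-1$) in case (1) and upward (from $B(\lambda)$ to $B(\lambda)+1$) in case (2). To prove acyclicity I suppose $D(P,M)$ has a directed cycle and let $s_0$ be the smallest value whose multiplicity is altered by some edge of the cycle. All cycle vertices then agree below $s_0$, and I first argue this common truncation has no parity defect: a directed cycle contains at least one matching (upward) edge, whose move acts at $B(\lambda)=v(\lambda)$, and since every alteration occurs at a value $\ge s_0$ that move has $B(\lambda)\ge s_0$; a defect below $s_0$ would force $v(\lambda)<s_0$ at every vertex, a contradiction. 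Hence the only edges touching $\mult(s_0)$ are transfers between $s_0$ and $s_0+1$ (a move involving $s_0-1$ would alter a smaller value), and since $\mult(s_0)$ returns to its starting value the cycle contains an upward transfer out of $s_0$; such a transfer can only be a case-(2) matching move with $B=s_0$, so $s_0$ has opposite parity to $\ell$.

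The decisive step is a parity count at $s_0$. An upward (matching) transfer $s_0\to s_0+1$ requires $\mult(s_0)$ odd beforehand, by the definition of $B$; and a downward transfer $s_0+1\to s_0$, traversed downward, is a non-matching covering, so it occurs at a vertex $\rho$ where reducing the part $s_0+1$ is \emph{not} the matching move -- were $\rho$ of case (1) it would have $A(\rho)=s_0+1$ (below $s_0$ is defect-free and $s_0$ has the wrong parity) and reducing $s_0+1$ would be precisely its matching move, so $\rho$ is of case (2) with $B(\rho)=s_0$, again forcing $\mult(s_0)$ odd beforehand. Thus every cycle edge touching $s_0$ sends $\mult(s_0)$ from odd to even, while all other edges leave it unchanged. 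Reading $\mult(s_0)$ around the cycle as a closed walk with steps $\pm1$ at these edges and $0$ elsewhere, after the first such edge the value is even and stays even, so no further edge can touch $s_0$; but a single $\pm1$ step cannot close up. This contradiction proves $M$ acyclic. I expect this acyclicity argument -- isolating the minimal altered value and extracting the odd-to-even dichotomy -- to be the main obstacle, the coefficient and rank-parity checks being routine.
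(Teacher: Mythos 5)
Your proof is correct and follows essentially the same route as the paper's: it checks that the matched coverings have unit boundary coefficients via Proposition~\ref{wreathdown}, that the critical cells of Proposition~\ref{unmatched} all lie in even ranks, and then invokes Lemma~\ref{Morse-lemma}(ii), with acyclicity proved by isolating the smallest value altered along a hypothetical directed cycle and exploiting the $A$/$B$ dichotomy there. The only (cosmetic) difference is the endgame of the acyclicity argument: the paper locates the first decrement $E+1\to E$ after the increment and shows that this edge would have to be a matching edge, whereas you observe that every edge altering $\mult(s_0)$ must take that multiplicity from odd to even and derive the contradiction from the parity of the resulting closed walk.
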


\begin{proof}
We need to verify the non-vanishing condition $D_{p,q}\in {\mathbb{F}}_2^\times$,
the hypotheses in (ii) and (iii) of Lemma~\ref{Morse-lemma}, 
and the acyclicity of $M$.

Let $p$ be $M$-matched with $M(p) = q < p$. 
We must show that $D_{p,q}=1\neq 0\in {\mathbb{F}}_2.$
The part $A(p)$ which is reduced by one by the matching $M$ has the same parity as $\ell$, so 
$\ell-A(p)+1=1\in {\mathbb{F}}_2.$ Since all parts of $p$ smaller than $A(p)$ have 
parity distinct from that of  $\ell$, and $B(p)>A(p),$ the multiplicity of $A(p)-1$ is even, so 
$mult_{p}(A(p)-1)+1=1\in {\mathbb{F}}_2.$ Thus by Proposition~\ref{wreathdown},
$D_{p,q}=1\neq 0\in {\mathbb{F}}_2.$

From Proposition~\ref{unmatched}, each unmatched $\lambda$ is a partition 
of an even integer, so (ii) is clear.

Let $p$ be $M$-unmatched. By Proposition~\ref{unmatched} all parts of $p$ 
have opposite parity from $\ell$
so Proposition~\ref{wreathdown} implies $ D_{p,r}=0\in {\mathbb{F}}_2$. 
Similarly $D_{q,p}=0\in {\mathbb{F}}_2,$ because, with $q_i$ as defined in Proposition \ref{wreathdown}, the multiplicity of 
$q_i-1$ in $q$ must be odd.

Let us now check acyclicity of $M$.  Suppose there is a
directed cycle $C$. Then $C$ alternates
$M$-edges traversed upward and non-$M$ Hasse 
edges traversed downward.  Let $E$ be the smallest part incremented or decremented 
on $C$. $E$ cannot be decremented, since all increments would then be 
of larger integers, and $C$ is a cycle. So $E$ is incremented, has the same parity as $\ell$, and 
all parts less than $E$ are fixed in $C$. When $E=B(\lambda)$ was incremented, $\lambda$ 
has no other parts  less than $E$ of the same parity  as $\ell$. So 
there are no parts less than $E$ of the same parity as $\ell$ anywhere on $C$.  Moreover 
all parts less than $E$ of the opposite parity from $\ell$ always have even multiplicity. 
The multiplicity of $E$ in $\lambda$ is odd. Find the first edge $f$ in $C$ 
where $E+1$ is decremented to $E$ in $\mu$ along a non-$M$ Hasse edge. At this time 
$A(\mu)=E+1$ because $\mu$ has no smaller parts with the same parity as $\ell.$  Also
$B(\mu)\ge E+2,$ because the multiplicity of $E$ in $\mu$ is even. Hence the edge
$f$ is an $M$-edge, which is a contradiction.
\end{proof}


\begin{theorem}
There is a bijection $\phi$ between the set of critical cells of $M$ and the set of self-complementary
partitions in a $k\times \ell $ rectangle.
\end{theorem}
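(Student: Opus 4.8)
The plan is to realize both the set of critical cells and the set of self-complementary partitions as the images of two explicit bijections with the single rectangle-partition poset $P(\lfloor k/2\rfloor,\lfloor \ell/2\rfloor)$, and then to take $\phi$ to be the composite. This has the advantage that each of the two constituent maps is a familiar, easily-inverted operation, so that bijectivity of $\phi$ is immediate and all of the work is in checking that each map is well defined.

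First I would pin down the self-complementary partitions. A partition $\lambda\in P(k,\ell)$ is self-complementary exactly when $\lambda_i+\lambda_{k+1-i}=\ell$ for all $i$; in particular $k\ell$ must be even, consistent with the parity discussion preceding the theorem. Such a $\lambda$ is determined by its top $\lfloor k/2\rfloor$ parts, which satisfy $\ell\geq\lambda_1\geq\cdots\geq\lambda_{\lfloor k/2\rfloor}\geq\lceil \ell/2\rceil$ (when $k$ is odd the central part is forced to equal $\ell/2$, which in turn forces $\ell$ to be even). Subtracting the offset $\lceil\ell/2\rceil$ from each of these top parts gives a bijection onto $P(\lfloor k/2\rfloor,\lfloor\ell/2\rfloor)$; the inverse reconstructs the bottom parts by $\lambda_{k+1-i}=\ell-\lambda_i$, and the only point to verify is the single monotonicity inequality at the center, which holds automatically.

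Next I would treat the critical cells via their description in Proposition~\ref{unmatched}. Every critical cell has all of its relevant parts (the nonzero ones, together with $0$ when $\ell$ is odd) of parity opposite to $\ell$ and of even multiplicity, so the nonzero parts pair up as $\lambda_{2i-1}=\lambda_{2i}$. Halving every multiplicity therefore yields a well-defined partition $\nu$, all of whose parts have parity opposite to $\ell$ and which has at most $\lfloor k/2\rfloor$ parts; the relabeling $\rho_i:=\lceil \nu_i/2\rceil$ then gives a bijection from the critical cells onto $P(\lfloor k/2\rfloor,\lfloor\ell/2\rfloor)$, with inverse doubling each part of $\rho$ back to the nearest integer of the correct parity and then doubling every multiplicity. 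Composing this halving bijection with the inverse of the self-complementary bijection of the previous paragraph defines $\phi$, and unwinding the two maps produces the closed form $\phi(\lambda)_i=\lceil\lambda_{2i-1}/2\rceil+\lceil\ell/2\rceil$ for the top $\lfloor k/2\rfloor$ parts.

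Since both constituent maps are manifestly bijections, so is $\phi$, and the entire content lies in verifying that each map lands in its claimed target and is reversible. I expect the main obstacle to be bookkeeping the parity cases uniformly, and in particular the asymmetric role of the part $0$: its multiplicity is unconstrained when $\ell$ is even (so any trailing zeros simply map to $0$ under $\lceil\,\cdot\,/2\rceil$), but it must be even when $\ell$ is odd, which is exactly condition (2) of Proposition~\ref{unmatched} and is precisely what makes the halving step well defined and what forces $k$ even in the odd-$\ell$ case. A clean writeup would either split into the cases $\ell$ even and $\ell$ odd or absorb the discrepancy into the floor/ceiling notation as above; in either case the remaining verifications are routine once the offset $\lceil\ell/2\rceil$ and the halving are correctly set up.
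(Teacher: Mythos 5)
Your proposal is correct, and it proves the theorem by a route that is recognizably parallel to the paper's in spirit but genuinely different in execution. The paper encodes each $\lambda\in P(k,\ell)$ by its boundary lattice word $W(\lambda)\in\{N,E\}^{k+\ell}$, notes that self-complementarity is equivalent to $W(\lambda)=W(\lambda)^{\rev}$, and then, in four parity cases, halves the run-lengths in the word of a critical cell to form $W^\prime(\lambda)$ and defines $\phi(\lambda)$ by an explicit palindrome such as $W^\prime(\lambda)E(W^\prime(\lambda))^{\rev}$; verifying that $\phi$ is a bijection is left to the reader, who must construct $\phi^{-1}$ case by case. You instead work part-wise: the characterization $\lambda_i+\lambda_{k+1-i}=\ell$ identifies the self-complementary partitions with $P(\lfloor k/2\rfloor,\lfloor\ell/2\rfloor)$ via the top half of the parts shifted by $\lceil\ell/2\rceil$, while Proposition~\ref{unmatched} lets you halve multiplicities and (essentially) values of a critical cell to land in the same poset, giving $\phi$ as a composite of two transparently invertible maps and even a closed formula $\phi(\lambda)_i=\lceil\lambda_{2i-1}/2\rceil+\lceil\ell/2\rceil$ for the top parts. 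The two halvings are really the same operation viewed in different coordinates --- the paper's $W^\prime(\lambda)$ is precisely the boundary word of your intermediate partition $\rho$ --- but factoring explicitly through the common index set $P(\lfloor k/2\rfloor,\lfloor\ell/2\rfloor)$ makes bijectivity immediate rather than deferred, at the cost of the same parity bookkeeping (the role of the part $0$, and the emptiness of both sides when $k\ell$ is odd) that the paper handles through its four cases. Both arguments rest on exactly the same inputs, so neither is more general; yours is somewhat more economical to verify completely.
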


\begin{proof}
Let $\lambda \in P(k,\ell)$.  Walking from the southwest corner of the $k \times \ell$ rectangle to the northeast corner along the southeast boundary of the Ferrers diagram of $\lambda$ (which includes the parts of the western and northern borders of the rectangle that are not contained in the Ferrers diagram), we take $k$ steps of length one to the north and $\ell$ steps of length one to the east, in some order.  Recording these steps in the order they are taken, we get a word $W(\lambda)$ of length $k+\ell$ in the alphabet $\{N,E\}$.  Collecting like terms, we write
\begin{equation} \label{word}
W(\lambda)=N^{k_1}E^{\ell_1}\ldots N^{k_t}E^{\ell_t},
\end{equation}
where $k_1,\ell_t \geq 0$ and all other $k_i,\ell_j$ are positive, $\sum_{i=1}^{t}k_i=k$ and $\sum_{j=1}^{t}\ell_j=\ell$.  

Note that if $\mu$ is complementary to $\lambda$ in our rectangle then $W(\mu)$ is the word $W(\lambda)^{\rev}$ obtained by writing $W(\lambda)$ backwards.  Therefore, $\lambda$ is self-complementary if and only if $W(\lambda)=W(\lambda)^{\rev}$.

We define $\phi$ depending on the parities of $k,\ell$ as follows.  If $k,\ell$ are both odd then both sets are empty and there is nothing to do.  

Now fix $k,\ell$ such that $k\ell$ is even, and let $\lambda$ be a critical cell of $M$.

Say $k$ is even and $\ell$ is odd.   It follows from Proposition \ref{unmatched} that, with $W(\lambda)$ written as in (\ref{word}),  each $k_i$ is even, $\ell_j$ is even for $1 \leq j \leq t-1$ and $\ell_t$ is odd.  We set
\[
W^\prime(\lambda):=N^{k_1/2}E^{\ell_1/2}\ldots N^{k_t/2}E^{(\ell_t-1)/2},
\]
and define $\phi(\lambda)$ to be the partition satisfying
\[
W(\phi(\lambda))=W^\prime(\lambda)E(W^\prime(\lambda))^{\rev}.
\]
Now say $k$ and $l$ are both even.  Then, in $W(\lambda)$, each $k_i$ is even.  If $\lambda$ is the empty partition then $W(\lambda)=N^kE^\ell$ and we define $\phi(\lambda)$ to be the partition satisfying
\[
W(\phi(\lambda))=N^{k/2}E^\ell N^{k/2}.
\]
Otherwise, in $W(\lambda)$, $\ell_1$ and $\ell_t$ are odd and all other $\ell_j$ are even.  We set
\[
W^\prime(\lambda):=N^{k_1/2}E^{(\ell_1+1)/2}N^{k_2/2}E^{\ell_2/2}\ldots N^{k_t/2}E^{(\ell_t-1)/2},
\]
and define $\phi(\lambda)$ to be the partition satisfying
\[
W(\phi(\lambda))=W^\prime(\lambda)W^\prime(\lambda)^{\rev}.
\]
Say $k$ is odd and $\ell$ is even.  Then $k_1$ is odd and $k_i$ is even for $2 \leq i \leq t$.  If $\lambda$ is the empty partition, so $W(\lambda)=N^kE^\ell$, we define  $\phi(\lambda)$ to be the partition satisfying
\[
W(\phi(\lambda))=N^{(k-1)/2}E^{\ell/2}NE^{\ell/2}N^{(k-1)/2}.
\]
Otherwise, $\ell_1$ and $\ell_t$ are odd, while $\ell_j$ is even for $2 \leq j \leq t-1$.  We set
\[
W^\prime(\lambda):=N^{(k_1-1)/2}E^{(\ell_1+1)/2}\ldots N^{k_t/2}E^{(\ell_t-1)/2},
\]
and define $\phi(\lambda)$ by
\[
W(\phi(\lambda))=W^\prime(\lambda)N(W^\prime(\lambda))^{\rev}.
\]
We leave it to the reader to find $\phi^{-1}$ in each case, thus confirming that $\phi$ is a bijection.

\end{proof}

\section{Application to plane partitions of bounded value in a rectangle}\label{plane}

Plane partitions  of bounded value inside a rectangle 
(see e.g. \cite{And}), or equivalently 3-dimensional partitions 
which lie inside an $r\times c\times t$ box, generalize
the objects  considered in the previous section, namely the
integer partitions which lie inside an $r\times c $ rectangle.  Specifically,
these integer partitions comprise  the $t=1$ case.
The generating function for all plane partitions inside an $r\times
c\times t $ box, counted by volume, is given by the Macmahon box formula \cite{And}
$$
X(r,c,t,q)=\prod_{i=1}^r\prod_{j=1}^c \frac{1-q^{r+t+j-i}}{1-q^{r+c-j-i+1}}.
$$
Stembridge \cite{Ste3} proved that the number of self-complementary plane partitions inside 
the $r\times c\times t $ box is $X(r,c,t,-1).$

In this section, we construct an ${\mathbb{F}}_2$-complex $(\CC(r,c,t),d)$ whose
$i$-dimensional cells are indexed by plane partitions of volume $i$ in a $r\times
c\times t $ box.  We prove homology concentration in Theorem \ref{box-thm}
and also give a homology basis comprised of objects that are known to be 
equinumerous with the set of self-complementary plane partitions.  


Rather than using the set of plane partitions as our indexing set, it will be more 
convenient instead to index bases for our chain spaces by another set of equal
cardinality, namely the set of semistandard Young tableaux (SSYT) of shape
$\lambda = (c)^r$ in which all entries lie weakly between $0$ and $r+t-1$.  We 
think of a plane partition inside an $r\times c$ rectangle as a Ferrers diagram of
shape $\lambda $ with weakly increasing rows and columns with integers 
entries lying weakly between $0$ and $t$.  A bijective map sending such a 
plane partition to a SSYT is obtained by adding $i-1$ to each entry in the $i$-th
row for each $i\in [r]$.


%

%

We now define the complex $(\CC(r,c,t),d)$, with coefficients in ${\mathbb{F}}_2$, 
by letting the chain group generators be indexed by 
the SSYT of $r\times c$ rectangular shape with entries
between $0$ and $t+r-1,$ denoted $SSYT(r,c,t).$  We partially order this set
$SSYT(r,c,t)$ by setting 
$T_1\le T_2$ if each entry of 
$T_1$ is no larger than the corresponding entry of $T_2.$  The poset 
$SSYT(r,c,t)$ is ranked with rank function 
\begin{equation}
\label{rank-eqn}
rank(T)= \left({\text{sum of entries of }} T\right) -c\binom{r}{2}=
||T||-c\binom{r}{2}.
\end{equation}

Define the boundary map $d$ as follows.  For $T\in SSYT(r,c,t),$
$dT$ is a sum over all possible $S$ in $SSYT(r,c,t)$ obtained by subtracting one 
from the leftmost copy in some row $R$ of some odd value $2k+1$ such that $2k+1$
has the following property: there are an odd number of copies of 
$2k+1$ in row $R$ that do not have $2k$ immediately above them in row
$R-1$.  In other words, $$dT = \sum_{T' \in B(T)} T'$$
for $B(T)$ the set of $SSYT(r,c,t)$ obtained by subtracting one from an odd
entry $\lambda_{i,j}$ of $T$ located at position $(i,j)$, with the 
further requirement that there are an odd number of occurrences of 
the value $\lambda_{i,j}$ in row $i$ that do not have the value
$\lambda_{i,j} - 1$ in the position just above them in row $i-1$.

\begin{proposition} 
$(\CC(r,c,t),d)$ is a chain complex, i.e., $d^2 = 0$.
\end{proposition}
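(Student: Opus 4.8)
The plan is to prove $d^{2}=0$ over $\FF_2$ by showing that, for every $T\in SSYT(r,c,t)$, each tableau $T''$ occurs an even number of times in $d^{2}T$. Write a \emph{move} $m=(i,v)$ for the operation determined by a row $i$ and an odd value $v$ appearing in row $i$ such that the number of copies of $v$ in row $i$ lacking $v-1$ directly above is odd; the move decrements the leftmost such copy. Then $d^{2}T$ is the sum, over ordered pairs $(m_{1},m_{2})$ with $m_{1}$ valid in $T$ and $m_{2}$ valid in $T'=m_{1}(T)$, of the resulting $T''$. First I would record a parity fact used everywhere: a move lowers an odd entry to an even one, so in any path $T\xrightarrow{m_{1}}T'\xrightarrow{m_{2}}T''$ the two decremented cells are distinct (were they equal, $m_{2}$ would be lowering an even entry). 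Hence $T''$ differs from $T$ in exactly two cells $p_{1},p_{2}$, each lowered by one.

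The core is a dichotomy for $m_{1}=(i_{1},v_{1})$ and $m_{2}=(i_{2},v_{2})$: either $m_{1}=m_{2}$, or neither move affects the other's validity or target cell. Since a move alters only its own row, and the validity of $(i_{2},v_{2})$ depends only on rows $i_{2}$ and $i_{2}-1$, the move $m_{1}$ can influence $m_{2}$ only when $i_{1}=i_{2}$ or $i_{1}=i_{2}-1$. I would dispose of the case $i_{1}=i_{2}-1$ by parity: whenever the cell altered by $m_{1}$ sits directly above a copy of $v_{2}$, its old value $v_{1}<v_{2}$ is odd, so both $v_{1}$ and $v_{1}-1$ lie strictly below the even number $v_{2}-1$, and no copy of $v_{2}$ gains or loses a $v_{2}-1$ above it. In the case $i_{1}=i_{2}$ with $v_{1}\neq v_{2}$, the even value $v_{1}-1$ created by $m_{1}$ is distinct from the odd value $v_{2}$ and $m_{1}$ deletes no copy of $v_{2}$, so neither the number of copies of $v_{2}$ nor the position of the leftmost one changes. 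The reverse influence of $m_{2}$ on $m_{1}$ is identical after exchanging indices. Thus two moves affect each other only when $(i_{1},v_{1})=(i_{2},v_{2})$.

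For a path with $m_{1}\neq m_{2}$, this independence makes reordering an involution: $m_{2}$ is valid in $T$ iff it is valid in $m_{1}(T)$ and decrements the same cell in each, and symmetrically for $m_{1}$, so $T\xrightarrow{m_{2}}m_{2}(T)\xrightarrow{m_{1}}T''$ is a second valid path to the same $T''$; since $m_{1}\neq m_{2}$ these two paths are distinct and cancel over $\FF_2$. The only potentially unpaired path has $m_{1}=m_{2}=(i,v)$, and I would show it never occurs. In a row the copies of $v$ occupy consecutive columns, and since the entries above them weakly increase and are all $<v$, the copies lacking $v-1$ above form a left prefix; if that prefix has odd size $s$, its leftmost member is the cell $m_{1}$ decrements. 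Lowering it removes exactly one prefix member and, because row $i-1$ is untouched, leaves every other copy's status unchanged, so the count drops from the odd number $s$ to the even number $s-1$. Hence $(i,v)$ is not valid in $m_{1}(T)$ and the repeated move contributes nothing. Combining the two cases, every $T''$ appears an even number of times, so $d^{2}=0$. The main obstacle is the interference analysis of the second paragraph: the content is precisely that the \emph{odd} decremented value and the \emph{even} value it creates never conspire to alter an adjacent move, and confirming this requires the careful, parity-driven case check sketched above.
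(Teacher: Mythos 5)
Your proof is correct and takes essentially the same route as the paper's: distinct decrements are shown not to interfere, so the two orderings of each pair cancel over $\FF_2$, while the parity count rules out applying the same move twice (and the odd-to-even observation rules out decrementing one cell twice). Your case analysis of when one move could affect another simply supplies the details behind the paper's assertion that ``decrementing an odd value will not impact the parity calculation\ldots for other odd values in the same row or for odd values in other rows.''
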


\begin{proof}
Any pair of distinct values that may both be decremented when we
apply $d$ twice to $T$ 
may be decremented in either order, so that the resulting SSYT with
the two values decremented occurs
with coefficient of 0 in $d^2 T$, because we are working  mod 2. 
One fact this relies upon is that
decrementing an odd value will not impact the parity calculation involved in the 
description of $d$ for other odd
values in the same row or for odd values in other rows.

The parity requirements preclude $d^2$  from decrementing a
single value by two or decrementing the same value at two different
positions in the same row.   
\end{proof}
%
%

\begin{definition}\label{3d-matching-definition}
\label{M-def}
Let $T$ be in $SSYT(r,c,t).$ 
Consider the rows in $T$ having at least one of the 
following properties:
\begin{enumerate}
\item
There is an odd value $i$ such that there are an odd number of  
occurrences of $i$ in the row which do not have $i-1$ immediately above
them in the prior row,
\item
There is an even value $i<r+t-1$ not having the value $i+1$ immediately below it such that the
value $i+1$  occurs an even (possibly zero) number of times in $i$'s
row in positions that do not have $i$ immediately above them.
\end{enumerate}
In (1) and (2), any condition referring to row $0$ or row $r+1$ is vacuously fulfilled. 

If there is some row of $T$ satisfying (1) or (2), define $M(T) \in SSYT(r,c,t)$ to be the 
tableau obtained from $T$ by

\begin{enumerate}
\item
choosing the smallest row $R$ of $T$ containing some $i$ that satisfies (1) or (2)
\item 
choosing the smallest $i$ in row $R$ such that $i$ satisfies (1) or (2)
\item
subtracting one from the leftmost copy of $i$ in row $R$ if $i$ is odd, and adding 
one to the rightmost copy of $i$ in $R$ if $i$ is even.
\end{enumerate}

\end{definition}

Next we verify that $M$ is well-defined. 
In case (1) decrementing the odd entry $i$ is allowed, and 
$M(T)\in SSYT(r,c,t).$ In case (2), incrementing the even entry $i$ is allowed,
and $M(T)\in SSYT(r,c,t)$ because $i+1\le r+t-1.$ 

Now let us show that $M$ is a matching, namely that  $M(M(T))=T$ whenever some row of
$T$ satisfies one of the conditions (1), (2) of Definition ~\ref{3d-matching-definition}. 
Suppose that $M(T)$ is obtained from $T$ by decrementing an odd entry $i$ in row $R$.
Then in  $M(T)$, row $R$ satisfies the condition (2) with respect to $i-1$.  Indeed, the copy of
$i-1$ in row $R$ of $M(T)$ which was an $i$ in $T$ cannot lie directly above an $i$ in 
$M(T)$, as $T$ has strictly increasing columns.  Moreover, there are oddly many copies of
$i$ in row $R$ of $T$ that do not have an $i-1$ immediately above them.  One of these must
be the copy that was decremented to obtain $M(T)$, as this is the leftmost copy.  Thus, there 
are evenly many such copies of $i$ in row $R$ in $M(T)$.  Given that we obtain $M(T)$ without
making any changes to the first $R-1$ rows of $T$ or to any entry smaller than $i$ in row $R$ 
of $T$, it follows that $M(M(T))$ is obtained from $M(T)$ be incrementing the same entry of
$M(T)$ that was decremented in $T$ to obtain $M(T)$.  Thus, $M(M(T)) = T $ in this case.  A 
similar argument handles the case where $M(T)$ is obtained from $T$ by instead 
incrementing an entry.

\begin{example} 
\label{SSYT-exam}
If $r=c=t=2$, the matching $M$ is
$$
\begin{matrix} 0&2\\3&3
\end{matrix}
\leftrightarrow
\begin{matrix} 1&2\\3&3
\end{matrix},
\qquad 
\begin{matrix} 0&2\\2&3
\end{matrix}
\leftrightarrow
\begin{matrix} 1&2\\2&3
\end{matrix},
\qquad 
\begin{matrix} 1&1\\2&2
\end{matrix}
\leftrightarrow
\begin{matrix} 1&1\\2&3
\end{matrix},
\qquad
\begin{matrix} 0&0\\3&3
\end{matrix}
\leftrightarrow
\begin{matrix} 0&1\\3&3
\end{matrix}
$$
$$
\begin{matrix} 0&0\\2&3
\end{matrix}
\leftrightarrow
\begin{matrix} 0&1\\2&3
\end{matrix},
\qquad 
\begin{matrix} 0&0\\1&3
\end{matrix}
\leftrightarrow
\begin{matrix} 0&1\\1&3
\end{matrix},
\qquad 
\begin{matrix} 0&0\\2&2
\end{matrix}
\leftrightarrow
\begin{matrix} 0&1\\2&2
\end{matrix},
\qquad
\begin{matrix} 0&0\\1&2
\end{matrix}
\leftrightarrow
\begin{matrix} 0&1\\1&2
\end{matrix}
$$
with unmatched points
$$
\begin{matrix} 2&2\\3&3
\end{matrix},
\qquad
\begin{matrix} 1&1\\3&3
\end{matrix},
\qquad
\begin{matrix} 0&2\\1&3
\end{matrix},
\qquad
\begin{matrix} 0&0\\1&1
\end{matrix}.
$$

\end{example}

\begin{proposition}
$M$ is a Morse matching.
\end{proposition}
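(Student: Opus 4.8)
The plan is to mimic the acyclicity argument already carried out for the $t=1$ case inside the proof of Theorem~\ref{rectangle-thm}, adapting it to the extra combinatorial data coming from the column-strict (third-dimensional) structure. First recall the shape of a would-be counterexample: a directed cycle $C$ in $D(SSYT(r,c,t),M)$ necessarily alternates between $M$-edges traversed upward --- each of which, by Definition~\ref{3d-matching-definition}, increments a chosen \emph{even} value --- and non-$M$ covering edges traversed downward, each decrementing a single entry by one. Since the matched edges pair up the tableaux occurring on $C$, every tableau $T$ on $C$ has its partner $M(T)$ as a cycle-neighbour; in particular the entry toggled by $M$ at $T$ is exactly the entry in which $T$ differs from that neighbour.

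I would then run the minimal-value reduction. Let $E$ be the smallest value $v$ for which some edge of $C$ changes an entry across the gap between $v$ and $v+1$. Entries of value $<E$ are frozen along $C$, and because the toggle performed by $M$ at any cycle tableau is its neighbour-difference, no value $<E$ can be eligible (satisfy (1) or (2)) anywhere on $C$; otherwise the lex-least eligible value would be selected and changed, violating minimality of $E$. Balancing the count of $E$'s around the closed cycle forces at least one increment $E\to E+1$; since increments occur only on $M$-edges and $M$ increments only even values, $E$ must be even, and at each such increment $E$ is the lex-least eligible value (the smallest value in the smallest eligible row).

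Next I would localise to the smallest row $R$ in which a gap-$E$ crossing occurs. Rows above $R$ then have their $E$-cells frozen, so for an entry in row $R$ the ``$E$ immediately above'' predicate is fixed column-by-column, and conditions (1) and (2) for the pair $(E,E+1)$ in row $R$ are both governed by the parity of the quantity $\beta := $ (number of copies of $E+1$ in row $R$ having no $E$ directly above them). The key local observation is that, by column strictness, a covering edge can decrement an $E+1$ to $E$ only at the \emph{leftmost} $E+1$ and only when that cell has a value $<E$ above it; hence each such decrement lowers $\beta$ by exactly one, while the matching's increment of the rightmost $E$ raises $\beta$ by exactly one, so the parity of $\beta$ flips with every gap-$E$ move in row $R$. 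Walking along $C$ to the first covering edge that decrements an $E+1$ to $E$ in row $R$, one checks that at its source tableau $\beta$ is odd, so $E+1$ satisfies (1), and that nothing of smaller priority is eligible; therefore $M$ is forced to perform exactly this decrement, making the edge an $M$-edge --- contradicting that it is a covering edge.

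The hard part will be the bookkeeping in this last step, precisely because of features absent from the rectangle case. I must dispose of $E+1\to E+2$ moves (harmless: an odd value is never incremented on an $M$-edge) while still accounting for possible $E+2\to E+1$ covering decrements, which create new copies of $E+1$ and can perturb $\beta$; and I must verify that toggling the lex-least eligible entry never alters the eligibility of any higher-priority (smaller-row, or smaller-value) entry, since the ``immediately above/below'' clauses couple adjacent rows. Establishing that these column-adjacency couplings respect the priority order --- so that the greedy ``toggle the lex-least eligible value'' rule genuinely cannot cycle --- is the crux, and is where the third dimension makes the argument more delicate than in Theorem~\ref{rectangle-thm}.
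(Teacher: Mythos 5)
Your overall strategy --- the alternating up/down structure of a putative directed cycle, an extremal choice among the toggled entries, and the claim that the first subsequent decrement across the same gap must itself be a matching edge --- is exactly the strategy of the paper's proof, which in turn mimics the acyclicity argument in Theorem~\ref{rectangle-thm}. But, as you say yourself in your last paragraph, you have not carried out the crux, and your choice of extremal data makes that crux genuinely harder than necessary. You minimize the \emph{value} $E$ first and the row $R$ second; with that order, at the tableau $V$ from which the offending decrement issues you must rule out eligibility of every pair $(R',j)$ with $R'<R$, and entries of value $>E$ in rows above $R$ are \emph{not} frozen along the cycle, so such eligibility can switch on between $M(T)$ and $V$. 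Your assertion that ``no value $<E$ can be eligible anywhere on $C$'' does not dispose of this and in fact does not even follow from minimality of $E$, because Definition~\ref{3d-matching-definition} prioritizes rows before values: a small eligible value in a large row is preempted by a larger eligible value in a smaller row. The paper instead minimizes lexicographically by (row, then incremented value) \emph{over the up-edges of the cycle}. Since every tableau on $C$ is matched to a cycle-neighbour, the lex-least eligible pair at every tableau of $C$ is then automatically at least $(R,i)$ in this order, which eliminates all higher-priority competitors at $V$ in one stroke and reduces everything to the single row $R$ and the single pair of values $i,i+1$.

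The two bookkeeping worries you flag are both resolvable, and a complete proof must resolve them rather than name them. First, decrements $E+2\to E+1$ cannot occur anywhere on $C$: up-edges increment only even values, so the odd value $E+1$ is never incremented along $C$, and conservation of the number of entries of value at least $E+2$ around the closed cycle forces the number of $E+2\to E+1$ decrements to be zero as well. Hence the multiset of copies of $E+1$ in row $R$ changes only via gap-$E$ crossings in that row, the ``$E$ immediately above'' predicate is frozen (no gap-$E$ crossing occurs in row $R-1$), and your parity count $\beta$ does flip by exactly one at each such crossing, as you claim; choosing a down-crossing that immediately follows an up-crossing in the cyclic order of gap-$E$ crossings in row $R$ guarantees $\beta$ is odd at its source. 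Second, the remaining priority check at that source $V$ is then easy with the paper's ordering: $(R,E)$ is ineligible there because $\beta$ is odd, values below $E$ in row $R$ have frozen eligibility, and rows below $R$ are excluded by the lexicographic minimality, so the matching at $V$ is forced to perform precisely the decrement carried by the cycle's down-edge, the desired contradiction. With these two points supplied your argument closes; as written it is a correct plan rather than a proof.
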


\begin{proof} 
Our proof that $M$ is acyclic is similar to the one 
given in the
previous section.  Assume for sake of contradiction 
that $C$ is a directed cycle in the directed graph $D(P,M)$ obtained from $M$.  Find $T$ and 
$M(T)$ in $C$ such that $T\leq M(T)$, chosen so that the row $R$ in which $T$ and $M(T)$
differ is as small as possible within the cycle $C$, and among such semistandard Young 
tableaux $S$ in $C$  such that $M(S) $ is obtained from $S$ by incrementing an entry in
row $R$, the entry $i$ of $R$ which is incremented to obtain $M(T)$ from $T$ is as small
as possible.  Since $C$ is a cycle, at some point in $C$ the resulting entry $i+1$ that is 
present in row $R$ of $M(T)$ but instead has value $i$ in $T$ must be decremented.  Say 
the first time (after obtaining $M(T)$ from $T$ in $C$) that such decrementation occurs, 
we obtain tableau $U$ from tableau $V$.  Then it follows that $V=M(U)$, which contradicts
the cycle $C$ having a directed edge downward from $V$ to $U$.
\end{proof}

Now we describe $P^{\unmatched }$.

\begin{lemma}
\label{crit-cell}
In any element of $P^{\unmatched }$, every  even value $2i$ with $2i < t+r-1$  has the odd 
value $2i+1$ just below it.  For each odd value $2i+1,$ the number of 
copies of $2i+1$ in a given row not having
$2i$ immediately above them is even.
\end{lemma}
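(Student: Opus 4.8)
The plan is to unwind the definition of the matching $M$ from Definition~\ref{M-def}: an element $T \in SSYT(r,c,t)$ lies in $P^{\unmatched}$ precisely when no row of $T$ satisfies condition (1) or condition (2) of that definition. I will show that the two assertions of the lemma correspond exactly to these two failures, with the second assertion being essentially a restatement of ``no row satisfies (1)'' and the first assertion following from ``no row satisfies (2)'' once the second is in hand.

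First I would record that the second assertion is immediate. A row fails condition (1) precisely when, for every odd value $2i+1$, the number of copies of $2i+1$ in that row not having $2i$ immediately above them is even. Since $T$ is unmatched, no row satisfies (1), so this even-count property holds in every row; this is exactly the second assertion. Here the boundary convention that conditions referring to row $0$ are vacuously fulfilled is used to treat copies in the top row as ``not having $2i$ above.''

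Next I would prove the first assertion using column-strictness of semistandard tableaux together with the second assertion. Suppose for contradiction that some even value $2i < t+r-1$ sits at a position $(R,j)$ without $2i+1$ directly below it. Because columns increase strictly, the entry below is then $\geq 2i+2$, and a short argument using weak increase along row $R+1$ shows that the \emph{rightmost} copy of $2i$ in row $R$ also fails to have $2i+1$ directly below it: if the rightmost copy had $2i+1$ below, then by monotonicity of row $R+1$ every copy of $2i$ would, so ``some copy lacks $2i+1$ below'' and ``the rightmost copy lacks $2i+1$ below'' coincide, removing any ambiguity in the reading of condition (2). Likewise, if $2i$ were in the bottom row, the convention for row $r+1$ makes ``$2i$ lacks $2i+1$ below'' vacuously true. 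In either case the first clause of condition (2) holds for the even value $2i$ in row $R$. For the second clause, $2i+1$ is odd, so the second assertion (already established) guarantees that $2i+1$ occurs an \emph{even} number of times in row $R$ in positions not having $2i$ above. Thus both clauses of condition (2) hold for row $R$ and value $2i$, so this row satisfies (2), contradicting that $T$ is unmatched. Hence every even $2i < t+r-1$ has $2i+1$ directly below it, which is the first assertion.

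The main obstacle is bookkeeping rather than conceptual: one must pin down the precise meaning of conditions (1) and (2), in particular that ``$2i$ lacks $2i+1$ below'' is governed by the rightmost copy of $2i$ and that ``occurrences not having $2i$ above'' respects the vacuous boundary conventions for rows $0$ and $r+1$, and then verify that these readings are the ones making the interchange between (1) and (2) exact. Once these are fixed, the heart of the argument is a clean parity clash: avoiding condition (2) when the first assertion fails would force an \emph{odd} count of $2i+1$ not having $2i$ above, while the second assertion forces that same count to be \emph{even}. I would also remark that the converse holds verbatim by the same parity argument, so the failure of conditions (1) and (2) is equivalent to the two displayed properties; thus Lemma~\ref{crit-cell} is in fact a genuine characterization of $P^{\unmatched}$.
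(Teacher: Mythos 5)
Your proof is correct, and all the essential ingredients coincide with the paper's: the negation of condition (1) in every row gives the parity statement for odd values, and the negation of condition (2), combined with that parity statement, forces every even value $2i<t+r-1$ to have $2i+1$ directly below it. The organization, however, is genuinely different. The paper argues by induction on rows from the top down, establishing in row $R+1$ that the positions not ``forced'' by row $R$ carry odd values occurring with even decrementable multiplicity, and only then deducing the placement of the even values; you instead observe that the second assertion of the lemma is \emph{literally} the negation of condition (1) quantified over all rows, so it needs no induction at all, and then obtain the first assertion in one step by exhibiting a row satisfying condition (2) whenever it fails. Your care about which copy of $2i$ condition (2) refers to (rightmost versus any, resolved by weak row monotonicity and strict column increase) is the same point the paper makes when it notes that semistandardness forces $2i+1$ below all copies of $2i$ once it sits below one of them, and your treatment of the row $0$ and row $r+1$ conventions is consistent with the paper's. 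What the paper's row-by-row formulation buys is the structural picture (which entries of row $R+1$ are forced by row $R$) that is reused immediately afterward in identifying $P^{\unmatched}$ with semistandard domino tableaux; what your direct argument buys is brevity, and, as you note, the converse for free, so that the two displayed conditions genuinely characterize $P^{\unmatched}$.
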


\begin{proof}
Start with the top row, and proceed downward from row $R$ to row $R+1$ 
by induction as follows.  In row 1, notice that each odd value must 
occur with even multiplicity, since otherwise we could match by 
decrementing the leftmost copy of the odd value.  Thus, each even 
value $2i$ in row 1 will have an even number of copies (possibly 0) of 
$2i+1$ just to its right; therefore we could match by increasing the
rightmost copy of $2i$ to $2i+1$ unless there is a $2i+1$ just below 
it.  Now since our fillings are semistandard, we also 
must have $2i+1$ just below all the other copies of $2i$ in that row.  
This handles row 1.

The same argument works at row $R+1$ once the claim has been proven 
through row $R$.  That is, we may have some odd values in row $R+1$ 
which were forced to be there by virtue of even values in row $R$, 
but all remaining spots to be filled in row $R+1$ will have odd values 
just above them, which ensures that each odd value $2i+1$ that is put 
in any of these remaining positions must occur with even multiplicity 
(not counting the occurrences of the value already forced by row
$R$) to avoid matching by decrementing $2i+1$.  And again any even
value $2i$ in row $R+1$ will have an even number of decrementable 
copies of $2i+1$ to its right, allowing matching by incrementing the 
rightmost $2i$ unless either
it has a copy of $2i+1$ just below it or it is the 
absolute largest allowable value. 
\end{proof}

One may check that $M$ is a Morse
matching meeting the requirements of Lemma \ref{Morse-lemma}.

\begin{corollary} 
\label{pp-even}
If $T\in P^{\unmatched}$ is a critical cell, then
$rank(T)$ in the poset $SSYT(r,c,t)$ is even.
\end{corollary}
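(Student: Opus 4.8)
The plan is to prove the stronger claim that for each row $R$ the number $o_R$ of odd entries in row $R$ of $T$ satisfies $o_R \equiv c(R-1) \pmod{2}$, and then read off the corollary. Granting this, and using that the sum of the entries of a row is congruent mod $2$ to the number of odd entries in that row, I get $\|T\| \equiv \sum_{R=1}^{r} o_R \equiv \sum_{R=1}^{r} c(R-1) = c\binom{r}{2} \pmod{2}$; since $rank(T) = \|T\| - c\binom{r}{2}$, this forces $rank(T)$ to be even. So everything reduces to the congruence for $o_R$, which I would establish by induction on $R$ driven by a recursion $o_R \equiv o_{R-1} + c \pmod{2}$.

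The recursion comes from reading the two clauses of Lemma~\ref{crit-cell} together. First (from the second clause) in any fixed row and for each odd value, the copies not sitting directly below their even predecessor occur an even number of times; summing over odd values shows that the number of odd entries of row $R$ that are \emph{not} the lower cell of a vertical pair $(2k,2k+1)$ is even. Hence $o_R$ is congruent mod $2$ to the number of odd entries of row $R$ that do have $2k$ immediately above them. Second (from the first clause) every even entry $2k$ with $2k<t+r-1$ has $2k+1$ directly below it; and the only even value that escapes this, namely $t+r-1$ itself, can occur only in the last row $r$, because a strictly increasing column cannot continue past the maximal value. Consequently every even entry lying in rows $1,\dots,r-1$ is $<t+r-1$ and is the top cell of such a vertical pair.

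Combining the two facts, the odd entries of row $R$ that have their even predecessor above them are precisely the bottom cells of vertical pairs whose top cell is an even entry of row $R-1$; and since $R-1 \le r-1$, every even entry of row $R-1$ is such a top cell, so this count equals the number of even entries of row $R-1$, that is $c - o_{R-1}$. Therefore $o_R \equiv c - o_{R-1} \equiv o_{R-1} + c \pmod{2}$. The base case is $o_1 \equiv 0$: row $1$ has no row above it, so the second clause of Lemma~\ref{crit-cell} (with the vacuous row $0$) says every odd value occurs in row $1$ with even multiplicity, whence $o_1$ is even. Iterating the recursion gives $o_R \equiv c(R-1) \pmod{2}$ for all $R$, which is what was needed.

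The step I expect to demand the most care is the bottom-of-tableau bookkeeping in the second fact: one must check that the maximal value $t+r-1$, which when even is \emph{not} forced to have a successor beneath it, is genuinely confined to row $r$, so that it never spoils the pairing across the boundary between rows $R-1$ and $R$ for any $R \le r$. It is also worth flagging why the per-row linkage is indispensable rather than a convenience: a column-by-column parity argument cannot work, since an individual column need not have sum congruent to $\binom{r}{2} \pmod{2}$ (for example a column $(1,3)$ contributes $4 \equiv 0$ while $\binom{2}{2}=1$), and it is exactly the second clause of Lemma~\ref{crit-cell} that couples the columns across a row and makes the global parity come out even.
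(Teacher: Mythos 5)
Your proof is correct and follows essentially the same route as the paper's: both arguments work row by row, establishing that the first row sum of a critical cell is even and that the row-sum parity flips from one row to the next exactly when $c$ is odd, so that $\|T\|\equiv c\binom{r}{2}\pmod 2$ and the rank is even. The only difference is one of detail: the paper simply asserts the alternation of row-sum parities as a consequence of Lemma~\ref{crit-cell}, whereas you supply the explicit vertical-pair bookkeeping (including the check that the maximal value $r+t-1$ is confined to the last row) that justifies it.
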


\begin{proof} We will use \eqref{rank-eqn} to find the parity of $rank(T).$
For any $T\in P^{unM}$, the first row sum 
of $T$ is even.  If the row length $c$ is odd, then the row sums 
alternate in parity.  Otherwise, all row sums are even.  So the sum of all entries in 
$T$ is even if $c$ is even, and is odd only when $c$ is odd, and 
$r\equiv 2,3 \mod 4.$  However, $c\binom{r}{2}$ is also odd only when $c$ is odd, and 
$r \equiv 2,3 \mod 4,$ so $rank(T)$ must be even.  
\end{proof}

From Corollary~\ref{pp-even} the elements of $P^{\unmatched}$ are equinumerous with 
self-complementary plane partitions. The next Proposition describes 
the elements of $P^{\unmatched}$ as domino tableaux.

\begin{proposition} The elements of $P^{\unmatched}$ are in bijection with
\begin{enumerate}
\item  the semistandard domino tableaux of rectangular shape $r\times c$ 
filled with odd values between $0$ and $r+t-1$, when $r+t-1$ is odd,
\item the semistandard domino tableaux of rectangular shape $r\times c$ 
filled with odd values between $0$ and $r+t-2$, allowing also monominos in the 
last row with value $r+t-1,$ when $r+t-1$ is even.
\end{enumerate}
\end{proposition}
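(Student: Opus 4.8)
The plan is to read off from each critical tableau $T \in P^{\unmatched}$ a tiling by dominoes (plus some monominoes in the second case), labelled by the values they cover, and to show that the resulting map $T \mapsto D(T)$ is the desired bijection. Everything rests on the description of $P^{\unmatched}$ in Lemma~\ref{crit-cell}.

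First I would build the forward map. By Lemma~\ref{crit-cell}, every even value $2i$ with $2i<r+t-1$, say at position $(R,j)$, has $2i+1$ directly below it at $(R+1,j)$; I glue these two boxes into a \emph{vertical} domino carrying the odd label $2i+1$. To cover the remaining boxes, fix a row $R$ and an odd value $2i+1$ and look at its block of occurrences, which is an interval of columns since rows weakly increase. Because columns strictly increase, the entries of row $R-1$ above this interval are weakly increasing and at most $2i$, so the value $2i$ occupies a (possibly empty) suffix of the interval; hence the occurrences of $2i+1$ having \emph{no} $2i$ above them form a prefix, and by Lemma~\ref{crit-cell} this prefix has even length. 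I pair its boxes off consecutively into \emph{horizontal} dominoes labelled $2i+1$. Finally, when $r+t-1$ is even it can, by column-strictness, occur only in the last row and is covered by neither kind of domino; I leave each such box as a monomino labelled $r+t-1$. A box-by-box count then shows $D(T)$ covers each cell exactly once and uses only the advertised labels, so it lies in the target set (case~(1) or case~(2) according to the parity of $r+t-1$).

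Next I would give the inverse and check both maps are well defined. The inverse $D\mapsto T(D)$ expands labels back into entries: a vertical domino $2i+1$ becomes $2i$ stacked over $2i+1$, a horizontal domino $2i+1$ becomes two copies of $2i+1$, and a monomino becomes $r+t-1$. That $T(D(T))=T$ and $D(T(D))=D$ is immediate from the constructions, so the real work is verifying that $T(D)$ is always a genuine semistandard tableau satisfying both clauses of Lemma~\ref{crit-cell}, and dually that $D(T)$ obeys the semistandardness conditions imposed on domino tableaux. Both reduce to transporting the weak-row/strict-column inequalities across the encoding: vertical dominoes encode exactly the column covers $2i<2i+1$, horizontal dominoes encode within-row equalities, and one checks that column-strictness of $T$ is equivalent to the label inequalities of the domino tableau, while the two conditions of Lemma~\ref{crit-cell} are precisely what the domino/monomino tiling records.

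I expect the main obstacle to be the bookkeeping in this last step, namely showing that decoding an \emph{arbitrary} admissible domino tableau returns an object that is simultaneously semistandard and critical. The two delicate points are (a) the claim that the free odd occurrences form an even-length prefix in each row, where column-strictness and the parity clause of Lemma~\ref{crit-cell} must be used together, and (b) the boundary analysis at row $0$, the last row, and the top value, confirming that the split into the two cases is exactly the parity of $r+t-1$ and that the monominoes of case~(2) are forced into the last row carrying the unique even, successor-free label. Once these are settled, the mutual inverseness of $D$ and $T$ is a formality, completing the bijection.
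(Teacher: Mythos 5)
Your proposal is correct and takes essentially the same route as the paper: both read off from Lemma~\ref{crit-cell} a tiling by vertical dominoes (a non-maximal even $2i$ over the forced $2i+1$), horizontal dominoes pairing the evenly many ``free'' copies of each odd value, and monominoes labelled $r+t-1$ in the last row when $r+t-1$ is even, then relabel the vertical dominoes by $2i+1$. Your prefix/suffix argument for why the free occurrences pair off, and your explicit treatment of the inverse, are slightly more detailed than the paper's proof but not a different approach.
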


\begin{proof}
We use Lemma~\ref{crit-cell} 
to show that our description of $P^{\unmatched}$  amounts to saying 
that $r\times c$ is tiled by two types of dominos: 
\begin{enumerate}
\item horizontal dominos in which both entries of the domino have
odd value $2i+1$, and 
\item vertical dominos in which the top entry is $2i$ and the bottom
entry is $2i+1$, along with perhaps some monominos in the bottom row of 
maximal allowed value $r+t-1,$ in the case that $r+t-1$ is even. 
\end{enumerate}

These tilings are transformed into the desired set of semistandard domino tableaux 
by simply replacing each $2i$ in the vertical dominos by $2i+1$.  
To see that we indeed get such a 
tiling, first notice that each non-maximal even value must belong to one of our vertical dominos,
and then notice that the remaining odd values of non-maximal value occur in horizontal pairs,
i.e. horizontal dominos, by our even parity requirement above. 

It remains to consider the maximal value $r+t-1,$ which must be in the last row. 
If $r+t-1$ is odd, the number of $r+t-1$ in the last row which do not have $r+t-2$ 
above them must be even, otherwise 
$r+t-1$ is available to be matched by case (1). Thus we tile this portion of the 
last row with horizontal dominos labelled with the odd number $r+t-1.$ It $r+t-1$ is even, 
there is no restriction on the even entries $r+t-1$ in the last row, place monominos there. 
\end{proof}

Note that if $r$, $c$, and $t$ are all odd, $P^{\unmatched}=\varnothing,$ because 
$r+t-1$ is odd, $rc$ is odd, and no monominos are allowed.
This is expected, because there are no self-complementary plane partitions in this case.

\begin{theorem} 
\label{box-thm}
The complex $(\CC (r,c,t),d)$ has homology concentrated in ranks of even parity. 
Moreover, a basis for the homology is given by Lemma~\ref{Morse-lemma}(iii) and
Lemma~\ref{crit-cell}. 
\end{theorem}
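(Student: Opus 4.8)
The plan is to derive the theorem directly from Lemma~\ref{Morse-lemma}, applied to the poset $P=SSYT(r,c,t)$ equipped with the matching $M$ of Definition~\ref{3d-matching-definition}. First I would record that $P$ supports $(\CC(r,c,t),d)$: the chain group in rank $i$ has basis the tableaux of rank $i$, the boundary $d$ lowers the sum of entries by one and hence lowers rank by one, and a tableau $S$ occurs in $dT$ only when $S$ is obtained from $T$ by decrementing a single entry, so that $S<_P T$. Since we have already shown $M$ to be an acyclic (Morse) matching, the remaining work is to verify the boundary-coefficient hypotheses of parts (ii) and (iii).

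Next I would check the non-vanishing condition $d_{p,q}=1\in\FF_2^\times$ for every matched pair with $q=M(p)<p$. The down-direction of $M$ decrements the leftmost copy of an odd value $i$ in the smallest qualifying row, and this is precisely a term of $d$: the decremented entry is odd and has an odd number of exposed copies (copies lacking $i-1$ directly above), which is exactly the defining membership condition for $B(p)$. As $q$ differs from $p$ in a single position, it arises in only one way, so its coefficient is $1$. With this established, Corollary~\ref{pp-even} tells us that every critical cell has even rank, so all unmatched elements lie in ranks of a single parity; the parenthetical remark in Lemma~\ref{Morse-lemma}(ii) then yields $\dim H_i=|P^{\unmatched}_i|$ for all $i$. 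In particular $H_i=0$ for odd $i$, which is the asserted concentration in even ranks.

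Finally I would verify hypothesis (iii), that $d_{p,r}=d_{q,p}=0$ for every critical $p$. The vanishing $d_{p,r}=0$ is immediate from Lemma~\ref{crit-cell}: in a critical cell every odd value has an even number of exposed copies in each row, so $B(p)=\varnothing$ and $d e_p=0$. The vanishing $d_{q,p}=0$ is the crux, and I expect it to be the main obstacle. Here one must show that $p$ lies in no boundary, i.e. that $p$ cannot be produced by decrementing an odd entry of any $q$; equivalently, that no even entry of $p$ can be raised to an odd value while remaining in $SSYT(r,c,t)$. This is exactly where the structural description of Lemma~\ref{crit-cell} does the work: every even value $2i<t+r-1$ in a critical cell has the odd value $2i+1$ directly below it, so incrementing such a $2i$ to $2i+1$ would destroy the strict increase down columns, while the maximal value $t+r-1$ cannot be raised at all. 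Hence no such $q$ exists and $d_{q,p}=0$. Lemma~\ref{Morse-lemma}(iii) then furnishes the homology basis $\{e_p:p\in P^{\unmatched}\}$, which Lemma~\ref{crit-cell} describes explicitly; together with the concentration already proved, this completes the argument.
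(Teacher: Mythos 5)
Your proposal is correct and follows essentially the same route as the paper: apply Lemma~\ref{Morse-lemma} to the acyclic matching $M$, using Corollary~\ref{pp-even} for the parity hypothesis in (ii) and Lemma~\ref{crit-cell} for the hypotheses of (iii). The paper leaves the boundary-coefficient verifications to the reader (``one may check that $M$ meets the requirements of Lemma~\ref{Morse-lemma}''), and your checks of $d_{p,q}\in\FF_2^\times$ on matched edges and of $d_{p,r}=d_{q,p}=0$ at critical cells correctly fill in exactly those details.
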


It now follows from 
\cite[Theorem 3.1(d)]{St}  and the comments thereafter that the number of self-complementary
plane partitions in an $r\times c \times t$ box is indeed $X(r,c,t,-1)$, since these are 
equinumerous 
with semistandard domino tableaux of rectangular shape $r \times c$  with appropriate 
corresponding  bound on value.

We next use the Morse matching $M$ to decompose  $SSYT(r,c,t)$ into a disjoint 
union of Boolean algebras, generalizing \cite[Proposition 11]{FRST} for $P(k,\ell).$ 

Let $Bottom(r,c,t)$ be the set of all $T\in SSYT(r,c,t)$  for which no odd entry 
satisfies (1) in Definition~\ref{M-def}. For each $T_{bottom}\in Bottom(r,c,t),$ let 
$T_{top}$ be the SSYT obtained by incrementing all 
possible even values which occur in case (2) in any row. Since these incrementations may be 
done independently, the interval $[T_{bottom}, T_{top}]$ is a Boolean algebra in $SSYT(r,c,t).$

\begin{theorem}
\label{SSYT-Boolean} 
As a set, $SSYT(r,c,t)$ is a disjoint union of Boolean algebras
$$
SSYT(r,c,t)=\cup_{T_{bottom}\in Bottom(r,c,t)} [T_{bottom}, T_{top}].
$$
\end{theorem}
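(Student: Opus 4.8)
The plan is to show that the intervals $[T_{bottom}, T_{top}]$ indexed by $Bottom(r,c,t)$ partition $SSYT(r,c,t)$, by exhibiting for each $T$ a canonical $T_{bottom}$ to which it belongs and verifying that every element of an interval lands in exactly one such block. First I would make precise the ``increment operation'' that builds $T_{top}$ from $T_{bottom}$. An element $T_{bottom}\in Bottom(r,c,t)$ has no odd entry satisfying condition (1) of Definition~\ref{M-def}; the only available moves are the case-(2) increments of even values, and these may be applied independently across rows and across distinct even values within a row. I would check that the set of allowable case-(2) increments is genuinely a collection of commuting, independent toggles: incrementing one qualifying even value $i$ to $i+1$ does not create or destroy the qualification of any other even value. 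This independence is what makes $[T_{bottom},T_{top}]$ a Boolean algebra, with the atoms being the individual increments and $T_{top}$ being the simultaneous application of all of them.

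Next I would define, for an arbitrary $T\in SSYT(r,c,t)$, its block representative. Given $T$, I would repeatedly \emph{decrement} even-then-odd values using the reverse of the case-(2) moves until no further reverse move applies; the terminal tableau is the candidate $T_{bottom}(T)$. The key claim to verify is that this terminal tableau lies in $Bottom(r,c,t)$, i.e.\ that once all reverse case-(2) moves are exhausted, no odd entry can satisfy condition (1). This is essentially the content already established in the analysis of $M$: the critical-cell structure from Lemma~\ref{crit-cell} and the well-definedness argument for $M$ show that the interplay between conditions (1) and (2) is governed by parity constraints that stabilize. I would argue that a tableau in which no reverse case-(2) move applies cannot have an odd value satisfying (1), since such an odd value would, after the relevant decrement, have been produced by an available reverse increment; more carefully, I would trace through the column-strictness to confirm that the absence of case-(2) reducibility forces the even-multiplicity condition on odd values described in the proof of Lemma~\ref{crit-cell}.

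Having produced $T_{bottom}(T)$, I would then show $T\in[T_{bottom}(T), T_{top}(T_{bottom}(T))]$, which amounts to checking that $T$ is recovered from $T_{bottom}(T)$ by applying some subset of the independent case-(2) increments. Finally, I would prove the partition is disjoint: if $T$ lay in two intervals $[T_{bottom}, T_{top}]$ and $[T'_{bottom}, T'_{top}]$, the independence of the toggles would let me recover a unique bottom element by applying all reverse increments, forcing $T_{bottom}=T'_{bottom}$. Concretely, I would argue that the set of increments distinguishing $T$ from its bottom is determined by $T$ alone (one reads off precisely which even-to-odd toggles are ``on''), so $T_{bottom}(T)$ is unique and the blocks cannot overlap.

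The main obstacle I expect is the well-definedness of $T_{bottom}(T)$, namely confirming that the reverse case-(2) reduction is confluent (terminates at a unique tableau independent of the order of moves) and that its output indeed lies in $Bottom(r,c,t)$. This is where the subtle parity bookkeeping of Definition~\ref{M-def} --- the requirement that $i+1$ occurs an \emph{even} number of times in positions lacking $i$ above, together with the strict column condition --- must be handled with care, since a naive decrement could inadvertently create a new odd value satisfying (1) in a higher row. I would address this by proceeding row by row from the top, mirroring the inductive structure in the proof of Lemma~\ref{crit-cell}, so that the reductions in row $R+1$ never disturb the already-stabilized rows above.
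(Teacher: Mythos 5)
Your proposal is correct and takes essentially the same route as the paper: the paper likewise defines $T_{bottom}$ by exhausting the case-(1) decrements and proves your ``independent commuting toggles'' claim by tracking the sets $D(T)$ and $E(T)$ of available decrements and increments, verifying that each move deletes exactly one pair $(R,i)$ from one set and inserts the corresponding pair into the other. The confluence issue you single out as the main obstacle is settled there by precisely this bookkeeping (making the row-by-row induction unnecessary), and the remainder of your outline coincides with the paper's argument.
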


\begin{proof} For any $T\in SSYT(r,c,t)$, consider the set 
$$
D(T)=\{(R,i): {\text{an odd $i$ may be decremented to $i-1$ in row $R$ of $T$ by case (1) of 
\ref{3d-matching-definition}}}\}
$$
of possible decrementations of odd values, and the analogous set of incrementations
$$
E(T)=\{(R,i): {\text{an even $i$ may be incremented to $i+1$ in row $R$ of $T$ by case (2) of
\ref{3d-matching-definition}}}\}.
$$

First we check that $(R,i)\in D(T)$ implies $(R,i-1)\notin E(T):$ in row $R,$ $T$ 
has an odd number of $(i-1)+1=i$ to the right of any $i-1$ which do not have $i-1$ 
immediately above them. Similarly one may check that 
$(R,i)\in E(T)$ implies $(R,i+1)\notin D(T).$

If $i\in T$ has been decremented in row $R$ to obtain $T'$, one may check that
$$
D(T')=D(T)-\{(R,i)\}, \quad E(T')=E(T)\cup \{(R,i-1)\},
$$
and also if $i\in T$ has been incremented in row $R$ to obtain $T'$, one may check that
$$
E(T')=E(T)-\{(R,i)\}, \quad D(T')=D(T)\cup \{(R,i+1)\}.
$$
Thus for a given $T\in SSYT(r,c,t)$, $T_{top}$ is found by incrementing $T$ until 
$E(T')=\varnothing,$ and $T_{bottom}$ is found by decrementing $T$ until 
$D(T')=\varnothing.$ 

The interval $[T_{bottom},T_{top}]$ is a Boolean algebra, because 
$T_{top}$ agrees with $T_{bottom}$, except at $E(T_{bottom})$, where they differ by 1.
\end{proof}

\begin{example} The Boolean algebras in Theorem~\ref{SSYT-Boolean}
for $r=c=t=2$ consist of
 the four unmatched points in Example~\ref{SSYT-exam}, and the intervals
 $$
 \left[\begin{matrix} 0&0\\1&2
\end{matrix},\  
\begin{matrix} 0&1\\1&3
\end{matrix}\right],\quad
\left[\begin{matrix} 0&2\\2&3
\end{matrix},\  
\begin{matrix} 1&2\\3&3
\end{matrix}\right],\quad
\left[\begin{matrix} 1&1\\2&2
\end{matrix}, \ 
\begin{matrix} 1&1\\2&3
\end{matrix}\right],\quad
$$
$$
 \left[\begin{matrix} 0&0\\3&3
\end{matrix}, \ 
\begin{matrix} 0&1\\3&3
\end{matrix}\right],\quad
\left[\begin{matrix} 0&0\\2&2
\end{matrix}, \ 
\begin{matrix} 0&1\\2&3
\end{matrix}\right],\quad
 $$
 \end{example}
Theorem~\ref{SSYT-Boolean} immediately yields the following enumerative result regarding
rank-generating functions of posets by virtue of our decomposition into Boolean algebras and
the combinatorial definition of a Schur function as a weighted sum over semistandard Young tableaux.

\begin{corollary} Let $\theta$ be the rectangular partition $c^r.$ 
Then the principally specialized Schur function 
has the expansion
$$
\begin{aligned}
s_{\theta}(1,q,\dots, q^{r+t-1})=q^{c\binom{r}{2}}X(r,c,t,q)=\sum_{T\in Bottom(r,c,t)} q^{||T||} (1+q)^{|E(T)|}.
\end{aligned}
$$
\end{corollary}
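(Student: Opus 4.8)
The plan is to read off the Corollary directly from Theorem~\ref{SSYT-Boolean}, proving its two equalities in turn. For the first equality I would invoke the combinatorial definition of the Schur function: assigning the entry value $j$ the weight $q^j$, one has $s_\theta(1,q,\dots,q^{r+t-1}) = \sum_T q^{||T||}$, where $T$ ranges over the semistandard Young tableaux of shape $\theta = c^r$ with entries in $\{0,1,\dots,r+t-1\}$, that is, over $SSYT(r,c,t)$. The bijection introduced at the start of this section (subtracting $i-1$ from each entry of row $i$) carries $SSYT(r,c,t)$ onto the plane partitions inside the $r\times c\times t$ box and lowers the sum of entries by exactly $c\sum_{i=1}^r (i-1) = c\binom{r}{2}$. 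Since those plane partitions are enumerated by volume via $X(r,c,t,q)$, this gives $\sum_T q^{||T||} = q^{c\binom{r}{2}} X(r,c,t,q)$, which is just the rank-shift already recorded in \eqref{rank-eqn}.

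For the second equality I would compute the generating function $\sum q^{||T||}$ separately on each block $[T_{bottom},T_{top}]$ of the disjoint decomposition provided by Theorem~\ref{SSYT-Boolean}. The structural fact established in the proof of that theorem is that $T_{top}$ differs from $T_{bottom}$ precisely at the positions listed in $E(T_{bottom})$, where each entry is raised by $1$ (an even value $2i$ becoming $2i+1$), and that these incrementations are independent. Hence the elements of the block correspond bijectively to subsets of $E(T_{bottom})$, with each chosen increment contributing one factor of $q$, so the local generating function is
\[
\sum_{T\in[T_{bottom},T_{top}]} q^{||T||} = q^{||T_{bottom}||}\,(1+q)^{|E(T_{bottom})|}.
\]
Summing over the indexing set $Bottom(r,c,t)$ of the decomposition then yields $\sum_{T\in SSYT(r,c,t)} q^{||T||} = \sum_{T\in Bottom(r,c,t)} q^{||T||}(1+q)^{|E(T)|}$, as claimed.

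Since essentially all of the combinatorial work has already been packaged into Theorem~\ref{SSYT-Boolean}, there is no genuine obstacle remaining; the only point requiring care is the bookkeeping in the local generating function, namely confirming that each increment raises $||T||$ by exactly one and that $|E(T_{bottom})|$ counts precisely the independently available increments. Both are guaranteed by the independence of the incrementations and by the explicit descriptions of $D(T)$ and $E(T)$ given in the proof of Theorem~\ref{SSYT-Boolean}, so the argument is a short assembly rather than a new computation.
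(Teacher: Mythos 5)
Your proposal is correct and follows exactly the route the paper intends: the paper states the corollary as an immediate consequence of Theorem~\ref{SSYT-Boolean} together with the combinatorial definition of the Schur function, which is precisely the two-step assembly you carry out. Your write-up simply makes explicit the rank shift $c\binom{r}{2}$ and the local factor $q^{||T_{bottom}||}(1+q)^{|E(T_{bottom})|}$, both of which are as you describe.
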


Again the Boolean algebras of rank 0 consist of our $M$-unmatched elements,
$T=T_{top}=T_{bottom}.$

\section{A counterexample and a conjecture}
\label{not-even}


Here we show that, despite the positive indications obtained from Theorem  \ref{odd-acyclic} and Theorem \ref{rectangle-thm}, there are infinitely many permutation groups $G$ such that the homology of $(\CC(\FF_2)^G,D)$ is not concentrated in even dimensions.  On the other hand, we conjecture that if $G \leq S_n$ is generated by an $n$-cycle then such homology concentration in even dimensions exists.  We provide evidence for this conjecture, along with a stronger conjecture giving a formula for the ranks of all homology groups under consideration.

Let $G \leq \Sym_n$.  By Theorem \ref{deBruijn-theorem} and Proposition \ref{four-complexes}(2), if $G$ has no self-complementary orbit on ${{[n]} \choose {n/2}}$ then the Euler characteristic of $(\CC(\FF_2)^G,D)$ is zero.  On the other hand, if $n$ is even and $G$ is transitive, then $H_0((\CC(\FF_2)^G,D))$ is nontrivial, by Proposition \ref{odd-acyclic}.  The next result follows.

\begin{proposition} \label{nocon}
Let $n$ be an even integer.  If $G \leq \Sym_n$ is transitive but has no self-complementary orbit on ${{[n]} \choose {n/2}}$ then the homology of $(\CC(\FF_2)^G,D)$ is not concentrated in even dimensions.
\end{proposition}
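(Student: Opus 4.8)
The plan is to combine the two facts already isolated in the paragraph preceding the statement, which together force the conclusion almost immediately. First I would recall that, by Theorem~\ref{deBruijn-theorem} together with Proposition~\ref{four-complexes}(2), the Euler characteristic of $(\CC(\FF_2)^G,D)$ equals $X(G,-1)$, which counts the self-complementary $G$-orbits on subsets of $[n]$. Since $n$ is even, the only subsets that can be self-complementary as orbits live in ${{[n]} \choose {n/2}}$: an orbit $\bar S$ is self-complementary exactly when $\bar S = \overline{[n]\setminus S}$, and as $|S|$ and $|[n]\setminus S|$ must agree for the orbits to coincide, we need $|S| = n/2$. Thus the hypothesis that $G$ has no self-complementary orbit on ${{[n]} \choose {n/2}}$ yields $X(G,-1)=0$, so the Euler characteristic of the complex vanishes.

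Next I would invoke the $H_0$ computation from Theorem~\ref{odd-acyclic} (cited in the excerpt as Proposition~\ref{odd-acyclic}). A transitive group on an even-sized set $[n]$ has a single orbit, of even cardinality $n$, hence no orbit of odd cardinality; by Theorem~\ref{odd-acyclic} the complex $(\CC(\FF_2)^G,D)$ is therefore \emph{not} acyclic, and in the non-acyclic case that theorem gives $H_0(\CC(\FF_2)^G,D)=\FF_2$. In particular $\dim_{\FF_2} H_0 = 1 \neq 0$, so the homology in degree $0$ is nontrivial.

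Finally I would assemble the contradiction with even concentration. Suppose, for contradiction, that the homology were concentrated in even dimensions. Then, writing $h_i = \dim_{\FF_2} H_i$, the Euler characteristic would be $\sum_i (-1)^i h_i = \sum_{i \text{ even}} h_i$, a sum of nonnegative integers that includes the strictly positive term $h_0 = 1$. Hence the Euler characteristic would be at least $1$, contradicting the vanishing established in the first step. Therefore the homology cannot be concentrated in even dimensions, which is exactly the assertion of the proposition.

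I do not expect any serious obstacle here: the argument is a short synthesis of results proved earlier in the paper, and the only point requiring a moment's care is the observation that self-complementary orbits on an even ground set are forced into the middle rank ${{[n]} \choose {n/2}}$, so that the stated hypothesis really does annihilate $X(G,-1)$. The rest is the standard Euler--Poincar\'e bookkeeping: a nonnegative alternating sum with a nonzero even-degree term cannot equal zero. The genuinely new input of the proposition is simply recognizing that nontriviality of $H_0$ (from transitivity) together with vanishing Euler characteristic (from the absence of middle-rank self-complementary orbits) are incompatible with all homology sitting in even degrees.
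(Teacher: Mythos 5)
Your argument is correct and is essentially identical to the paper's own proof: the paper likewise combines the vanishing of the Euler characteristic $X(G,-1)$ (via Theorem~\ref{deBruijn-theorem} and Proposition~\ref{four-complexes}(2), using that self-complementary orbits must sit in the middle rank when $n$ is even) with the nontriviality of $H_0$ for a transitive group on an even set (from Theorem~\ref{odd-acyclic}). Your write-up merely makes explicit the Euler--Poincar\'e bookkeeping that the paper leaves implicit.
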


As shown by J. R. Isbell in \cite{Is1}, $G \leq \Sym_n$ has a self-complementary orbit on ${{[n]} \choose {n/2}}$ if and only if $G$ contains a derangement whose order is a power of two.  (This is not hard to prove.)  Isbell shows also in \cite{Is1} that if $n=2b$ with $b>1$ odd, then there exists a transitive subgroup of $\Sym_n$ containing no derangement whose order is a power of two.  As we found copies of \cite{Is1} to be not easily available, we mention here a construction of such subgroups.  Given an odd integer $b>1$, find the smallest integer $d$ such that $b$ divides $2^d-1$.  Let $V=\FF_2^d$.  There exists a cyclic subgroup $C \leq GL(V)$ such that $C$ has order $b$ and acts irreducibly on $V$.  Form the semidirect product $G:=CV$.  Choose an ordered basis $e_1,\ldots,e_{d-1},e_d$ for $V$ such that the matrix of a generator $c$ for $C$ is in rational canonical form with respect to this basis.  So, $e_ic=e_{i+1}$ for $1 \leq i \leq d-1$ and $e_dc=\sum_{i=1}^d\alpha_ie_i$, where the characteristic polynomial of $c$ is $x^d+\sum_{i=1}^d\alpha_ix^{i-1}$.   As $c$ acts irreducibly on $V$, $1$ is not a root of this characteristic polynomial.   It follows that $\sum_{i=1}^d \alpha_i \equiv 0$  Set
\[
H:=\left\{\sum_{i=1}^da_ie_i \in V:\sum_{i=1}^da_i \equiv 0\right\}.
\]
Now $H$ is a hyperplane in $V$ and therefore a subgroup of $G$.  Note that $[G:H]=2b$.  As $C$ acts irreducibly on $V$, $H$ contains no nontrivial normal subgroup of $G$.  Thus the action of $G$ on the cosets of $H$ by translation gives an embedding of $G$ in $\Sym_{2b}$.  The conjugacy classes of nontrivial elements of $2$-power order in $G$ are exactly the orbits of $C$ on $V$.  Therefore, to show that no $2$-element of $G$ is a derangement, it suffices to show that every such orbit intersects $H$ nontrivially.  Pick some nonempty $I \subseteq [d]$ and let $v=\sum_{i \in I}e_i \in V$.  If $|I|$ is even then $v \in H$.  If $|I|$ is odd, let $m$ be the largest element of $I$.  Then
\[
vc^{d-m+1}=e_dc+\sum_{j \in I \setminus \{m\}}e_{j+d-m} \in H.
\]
We have the following conclusion.

\begin{proposition}
If $b>1$ is odd, there exists some $G \leq \Sym_{2b}$ such that the homology of $(\CC(\FF_2)^G,D)$ is not concentrated in even dimensions.
\end{proposition}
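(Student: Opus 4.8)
The plan is to let $G = CV \leq \Sym_{2b}$ be exactly the group constructed in the preceding paragraphs, acting on the $2b$ cosets of the hyperplane $H$ by translation, and then to verify the two hypotheses of Proposition~\ref{nocon} with $n = 2b$. Since $n$ is even, what remains is to check that $G$ is transitive and that $G$ has no self-complementary orbit on $\binom{[2b]}{b}$. Transitivity is immediate: the coset action of any group is transitive, and it is faithful here because $H$ contains no nontrivial normal subgroup of $G$, so the image is a transitive subgroup of $\Sym_{2b}$.

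For the second hypothesis I would appeal to Isbell's criterion, quoted above: $G$ has a self-complementary orbit on $\binom{[2b]}{b}$ if and only if $G$ contains a derangement whose order is a power of two. Thus it suffices to rule out such derangements. Because $|C| = b$ is odd and $V$ is an elementary abelian $2$-group, every element of $2$-power order in $G$ lies in $V$, and since $V$ is abelian and normal the $G$-conjugacy class of a nontrivial $v \in V$ is precisely its $C$-orbit. An element of $G$ fixes a coset $xH$ if and only if it is conjugate into $H$; hence a $2$-element is a derangement exactly when its $C$-orbit misses $H$. The computation carried out above shows the opposite: writing $v = \sum_{i \in I} e_i$ for a nonempty $I \subseteq [d]$, the orbit meets $H$ — trivially when $|I|$ is even, and via the explicit element $vc^{d-m+1}$ with $m = \max I$ when $|I|$ is odd. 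So no $2$-element is a derangement, and $G$ has no self-complementary orbit on $\binom{[2b]}{b}$.

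Both hypotheses of Proposition~\ref{nocon} then hold, and the conclusion that the homology of $(\CC(\FF_2)^G, D)$ is not concentrated in even dimensions follows at once. The one substantive step is the orbit-intersection argument establishing that $G$ has no $2$-power-order derangement; everything else is a formal assembly of Proposition~\ref{nocon}, Isbell's theorem, and the construction already in place. I expect that verifying the intersection in the odd-$|I|$ case — ensuring $vc^{d-m+1}$ genuinely lands in $H$ using the rational-canonical-form relations $e_i c = e_{i+1}$ together with $\sum_i \alpha_i \equiv 0$ — is where the only real care is needed, though that calculation has effectively been done above.
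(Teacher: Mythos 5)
Your proposal is correct and follows essentially the same route as the paper: the paper's own justification consists precisely of the construction of $G=CV$ acting on the cosets of $H$, the observation that the nontrivial $2$-power-order elements form $C$-orbits on $V$ each of which meets $H$ (so none is a derangement), Isbell's criterion, and Proposition~\ref{nocon}. Your added remarks on faithfulness and transitivity of the coset action, and on why $2$-elements lie in $V$, are accurate fillings-in of details the paper leaves implicit.
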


Before turning to $n$-cycles, we remark that Isbell conjectured that if we fix an odd number $b$, then for all large enough $a$, every transitive subgroup of $\Sym_{2^ab}$ contains a derangement whose order is a power of two.

Although the homology of the complexes  introduced in Section ~\ref{chain-complex-section}
is not always concentrated in even dimensions, 
we are still interested in interesting families of groups $G$
for which this concentration does occur.
In this situation, the Poincar\'e polynomial for the homology of,
say, $(\CC(\FF_2)^G,D)$, can be interpreted as giving a grading on the set
of self-complementary $G$-orbits.

We conclude with evidence that this concentration holds for $G$ a cyclic group $C_n$
generated by an $n$-cycle in
$\Sym_n$.  In this case the orbits are  necklaces, as in  \cite{Re}. 
Here are  homology calculations from {\tt
Mathematica} on $C_n$ for $n$ even; the case where $n$ is odd is settled by 
Proposition \ref{odd-acyclic}.

$$
\begin{matrix}
n &\text{ homology ranks } \\
 &         \\
2& {1, 0, 0} \\
4& {1, 0, 1, 0, 0}\\
6&{1, 0, 0, 0, 1, 0, 0}\\
8&{1, 0, 1, 0, 1, 0, 1, 0, 0}\\
10& {1, 0, 0, 0, 2, 0, 0, 0, 1, 0, 0}\\
12&{1, 0, 1, 0, 2, 0, 2, 0, 1, 0, 1, 0, 0}\\
14&{1, 0, 0, 0, 3, 0, 2, 0, 3, 0, 0, 0, 1, 0, 0}\\
16&{1, 0, 1, 0, 3, 0, 5, 0, 5, 0, 3, 0, 1, 0, 1, 0, 0}\\
18&{1, 0, 0, 0, 4, 0, 6, 0, 8, 0, 6, 0, 4, 0, 0, 0, 1, 0, 0}
\end{matrix}
$$

The following calculation of $X(C_n,q)$ and  $X(C_n,-1)$ may be done using 
Polya-Redfield theory or Burnside's lemma.

\begin{proposition}
\label{cyclic-Polya-calculation}
For $n>2$, 
$$
\begin{aligned}
X(C_n,q) &= \frac{1}{n} \sum_{d: d | n} \varphi(d)  (1+q^d)^{\frac{n}{d}},\mbox{ and}\\
X(C_n,-1) &= \frac{1}{n} \sum_{\substack{d: d | n,\\ d\text{ even
}}} \varphi(d) 2^{\frac{n}{d}},
\end{aligned}
$$ where $\phi (d)$ denotes Euler's totient function.
\end{proposition}

Note that the above homology data suggest that 
\begin{enumerate}
\item[$\bullet$]
$H_i(\CC^{C_{2n}})=0$ for $i$ odd, and for $i=2n-1,2n$, and
\item[$\bullet$]
$H_{2j}(\CC^{C_{2n}})=H_{2n-2-2j}(\CC^{C_{2n}})$.
\end{enumerate}

But one can be much more precise.  Note
that using the formula in Proposition~\ref{cyclic-Polya-calculation} 
for $X(C_n,-1)$, one can easily check that it satisfies
the recursion
$$
\begin{aligned}
X(C_{2n},-1)
& = \frac{X(C_n,-1)+X(C_n,1)}{2}. \\
\end{aligned}
$$

\noindent
This recursion may be modified to  a recursion
predicting the homology Poincar\'e series.  
For notational convenience, define
$$
\begin{aligned}
A_n(q) &:=\sum_{i=1}^{n}\dim H_i((\CC(\FF_2)^{C_n},D))q^i \\
X_n(q) &:=X(C_n,q).
\end{aligned}
$$

\begin{conjecture}
\label{c2nconj}
For any positive integer $n$, one has $H_i((\CC^{C_{2n}},D))=0$ if $i$ is odd, so that
$A_{2n}(q)$ is a polynomial in $q^2$, and one has the recursion
$$
\begin{aligned}
A_{2n}(q^{1/2}) 
& = \frac{qA_n(q)+X_n(q)}{1+q}. \\
\end{aligned}
$$
\end{conjecture}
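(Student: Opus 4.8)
The plan is to trivialize the $C_{2n}$-action down to a $C_n$-action on a tiny graded complex, and then extract the recursion from the resulting structure. Write $C_{2n}=\langle\sigma\rangle$ acting on $\ZZ/2n$ and let $\tau=\sigma^n$ be the central antipodal involution $i\mapsto i+n$. Taking invariants in two stages gives $\CC(\FF_2)^{C_{2n}}=(\CC(\FF_2)^{\langle\tau\rangle})^{C_n}$. Since $\tau$ merely swaps the two tensor factors in each antipodal pair $\{i,i+n\}$, one gets $\CC(\FF_2)^{\langle\tau\rangle}=W^{\otimes n}$, where $W:=(V\otimes V)^{\langle\mathrm{swap}\rangle}$ has basis $a=e_0\otimes e_0$, $b=e_0\otimes e_1+e_1\otimes e_0$, $c=e_1\otimes e_1$ in degrees $0,1,2$; the induced down-map acts by $c\mapsto b$, $b\mapsto 0$, $a\mapsto 0$, and the residual $C_n$ permutes the $n$ factors cyclically (the wrap-around twist is an internal swap of a pair, which fixes $a,b,c$ and so disappears). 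Thus the whole problem becomes the computation of $H_\ast((W^{\otimes n})^{C_n},D)$. As a complex $W=A\oplus K$, with $A=\langle a\rangle$ the trivial degree-$0$ summand and $K=\langle b,c\rangle$ a two-term acyclic complex isomorphic to $V[1]$; in particular $W\simeq\FF_2$ in degree $0$ via the homotopy $s\colon b\mapsto c$ (else $0$), which satisfies $Ds+sD=\mathrm{pr}_K$.

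First I would record that the even-concentration assertion is a \emph{free} consequence of the recursion. Clearing the denominator (and substituting $q\mapsto q^2$ in the stated form) the recursion reads
\[
(1+q^2)\,A_{2n}(q)=X_n(q^2)+q^2A_n(q^2),
\]
where $A_m$ carries $\dim H_0=1$ in its constant term, as in the displayed data. The right-hand side is manifestly a polynomial in $q^2$; writing $A_{2n}(q)=\sum_i c_iq^i$ and matching the coefficient of each odd power $q^{2m+1}$ on the left gives $c_{2m+1}+c_{2m-1}=0$, and since $c_{-1}=0$ this forces every odd coefficient to vanish. So it suffices to establish the displayed recursion.

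To attack the recursion I would exploit the decomposition above. Because $D$ preserves each summand $W_T:=\bigotimes_{i\in T}K\otimes\bigotimes_{i\notin T}A$ of $W^{\otimes n}$ while $C_n$ merely permutes these summands by $T\mapsto\sigma T$, the invariant complex splits as an honest direct sum of complexes
\[
(W^{\otimes n})^{C_n}=\bigoplus_{\overline{T}\in 2^{[n]}/C_n}\bigl(K^{\otimes T_0}\bigr)^{\mathrm{Stab}(T_0)},
\]
one summand per necklace $\overline{T}$, with $T_0$ a chosen representative. Here $\mathrm{Stab}(T_0)=\langle\sigma^s\rangle$ is generated by translation by the period $s$ of $T_0$, hence acts \emph{freely} on the index set $T_0$; and using $K\cong V[1]$ the homology of each summand is a degree-shifted smaller cyclic-invariant homology $H_{\ast-|T_0|}\bigl((V^{\otimes T_0})^{\mathrm{Stab}(T_0)}\bigr)$. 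The extreme necklaces are transparent: $T_0=\varnothing$ contributes $\FF_2$ in degree $0$ (the $H_0$), and $T_0=[n]$ contributes $H_\ast(\CC(\FF_2)^{C_n})$ shifted up by $n$. The goal is then to reassemble all these contributions, feeding the summands whose free $\mathrm{Stab}$-action splits $T_0$ into several orbits back through the P\'olya/Burnside expression for $X_n$ in Proposition~\ref{cyclic-Polya-calculation}, presumably by induction on the $2$-adic valuation of $n$.

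The main obstacle is precisely the failure of the invariants functor to be exact in characteristic $2$. The homotopy contracting $W$ to $\FF_2$ tensors up to a contraction of $W^{\otimes n}$, but only after choosing an ordering of the factors, so it is never $C_n$-equivariant; consequently $(W^{\otimes n})^{C_n}$ is far from acyclic, and all of $A_{2n}$ in positive degree measures this non-equivariance, i.e.\ the mod-$2$ cohomology of the cyclic $2$-subgroups of $C_n$. Compounding this, the necklace decomposition does \emph{not} align term-by-term with the two-term right-hand side: even the clean $T_0=[n]$ summand contributes $q^nA_n(q)$ rather than the required $q^2A_n(q^2)$, and the intermediate summands (with even-order stabilizer and $T_0$ split into $k>1$ free orbits) are not themselves single necklace complexes, so the induction does not close on one parameter. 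Producing the alternating cancellations that collapse this necklace-indexed sum into the $q^2$-doubled right-hand side is the crux, and is presumably why the statement remains conjectural. Failing a direct resummation, I would instead hunt for an acyclic (Morse) matching on the poset of $\{a,b,c\}$-necklaces whose critical cells manifestly realize the right-hand side, in the spirit of the successful matchings of Sections~\ref{rect} and~\ref{plane}.
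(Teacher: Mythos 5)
The statement you are addressing is Conjecture~\ref{c2nconj}; the paper gives no proof of it, only evidence (agreement at $q=1$, machine computation through $n=18$, an unincluded argument for odd $n$, and the $H_2$ prediction), so there is no argument of the authors' to compare yours against, and your proposal candidly does not close the argument either. Your one complete deduction is fine: granting the recursion in the form $(1+q^2)A_{2n}(q)=X_n(q^2)+q^2A_n(q^2)$, the right side is a polynomial in $q^2$, and matching odd coefficients together with nonnegativity of Betti numbers forces the odd-degree homology to vanish (note, though, that the paper's $A_n$ starts its sum at $i=1$, so it has no $H_0$ constant term).

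The serious problem is that your foundational reduction is wrong. With $\tau=\sigma^n$ acting diagonally as the product of the $n$ antipodal transpositions, the invariants $\CC(\FF_2)^{\langle\tau\rangle}$ are \emph{not} $W^{\otimes n}$ for $W=(V\otimes V)^{\mathrm{swap}}$: invariants under the diagonal order-two subgroup of $(\ZZ/2)^n$ are strictly larger than the tensor product of pairwise invariants. Indeed $\dim\CC(\FF_2)^{\langle\tau\rangle}=(4^n+2^n)/2$ while $\dim W^{\otimes n}=3^n$, and for $S$ with $\tau S\neq S$ the orbit sum $e_S+e_{\tau S}$ (e.g.\ $e_{\{1,4\}}+e_{\{2,3\}}$ for $2n=4$) is $\tau$-invariant but not a combination of decomposable tensors of pair-invariants. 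Thus $(W^{\otimes n})^{C_n}$ is only a proper subcomplex of $\CC(\FF_2)^{C_{2n}}$, and its homology is not the object of the conjecture. The discrepancy is already visible in graded dimensions for $2n=6$: the coefficient of $q^3$ in $X(C_6,q)$ is $4$, whereas the number of $C_3$-necklaces in $\{a,b,c\}^3$ of total degree $3$ is $3$ (totals $14$ versus $11$), so the necklace-indexed decomposition you build afterwards is computing the wrong complex. To salvage the two-stage idea you would have to account as well for the quotient $\CC(\FF_2)^{\langle\tau\rangle}/W^{\otimes n}$, at which point the clean $\{a,b,c\}$-necklace picture is lost; and, as you yourself observe, even within your framework the $T_0=[n]$ summand produces $q^nA_n(q)$ rather than the $q^2A_n(q^2)$ the recursion demands, so no induction closes. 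The statement remains, as in the paper, a conjecture.
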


We have some strong evidence for Conjecture \ref{c2nconj}.  It is correct at $q=1$.
It holds through $n=18$, as shown earlier. 
We have proven it for $n$ odd, although we do 
not include a proof here.  In addition, it is correct with regard to its prediction 
about $H_2((\CC^{C_{2n}},D))$.

\section{Acknowledgments}

The authors are grateful to Vic Reiner for his numerous helpful suggestions both 
regarding mathematics and also his substantial assistance with the exposition.

\end{document}